\newtheorem{theorem}{Theorem}[section]
\newtheorem{lemma}[theorem]{Lemma}
\newtheorem{corollary}[theorem]{Corollary}
\newtheorem{question}[theorem]{Question}
\theoremstyle{definition}
\newtheorem{definition}[theorem]{Definition}
\newtheorem{example}[theorem]{Example}
\newtheorem{remark}[theorem]{Remark}
\newcommand\R{\mathbb R}
\newcommand\A{{\overline{A}}}
\newcommand\CA{{\mathcal A}}
\newcommand\CB{{\mathcal B}}
\newcommand\CC{{\mathcal C}}
\newcommand\CP{{\mathcal P}}
\newcommand\CR{{\mathcal R}}
\DeclareMathOperator\conv{conv}
\title[Associahedra minimize $f$-vectors of secondary polytopes]{Associahedra minimize $f$-vectors of secondary polytopes of planar point sets}
\author{Antonio Fern\'andez and Francisco Santos}
\date{}
\thanks{Supported by grants PID2022-137283NB-C21 funded by MCIN/AEI/10.13039/501100011033, by FPU19/04163 of the Spanish Government and by project CLaPPo (21.SI03.64658) of Universidad de Cantabria and Banco Santander}
\address{
Departamento de Matem\'aticas, Estad\'istica y Computaci\'on\\
Universidad de Cantabria\\
39005 Santander, Spain
}
\email{francisco.santos@unican.es, antonio.fernandezg97@gmail.com}
\begin{document}

\begin{abstract}
Kupavskii,  Volostnov, and Yarovikov have recently shown that any set of $n$ points in general position in the plane has at least as many (partial) triangulations as the convex $n$-gon. We generalize this in two directions: we show that \emph{regular} triangulations are enough, and we extend the result to all regular subdivisions, graded by the dimension of their corresponding face in the secondary polytope.
\end{abstract}

\maketitle

\setcounter{tocdepth}{1}

\section{Introduction}
Triangulating a finite set of points $\CA\subset \R^2$ is a very useful tool both from a theoretical and an applied point of view. In particular, the problem of how many triangulations a point set can have has attracted some attention. 

The most classical example is that of points in \emph{convex position}; that is, when $\CA$ is the vertex set of a convex $n$-gon. Then, the exact number of triangulations equals the Catalan number
\[
C_{n-2} := \frac{1}{n+1}\binom{2n-4}{n-2} \in \Theta(4^n n^{-3/2}),
\]
a formula already known to Euler. See, e.g., \cite[Theorem 1.1.2]{triangbook} for a proof.

Configurations with more and with fewer triangulations than the convex $n$-gon are known, but before we delve into discussing the topic it is convenient to make two precisions:
\begin{enumerate}
\item We distinguish between \emph{partial} and \emph{full} triangulations. In both cases a triangulation of $\CA$ is a subdivision of $\conv(\CA)$ into triangles using only points from $\CA$ as vertices and intersecting edge-to-edge (that is, the intersection of two triangles is either empty, a vertex, or a full edge). But in \emph{full triangulations} we require that all the points of $\CA$ are used as vertices, while in \emph{partial triangulations} we allow for some of them not to be used. (For points in convex position all triangulations are full).

\item We will only consider point sets \emph{in general position}, that is, with no three of them on the same straight line. Without this restriction, the configuration with the least number of triangulations consists of the vertices of a triangle plus $n-3$  points along one edge of it; this has a single full triangulation, and $2^{n-3}$ partial triangulations. 

Any configuration can be slightly perturbed into general position, and  perturbation can only increase the number of triangulations.
\end{enumerate}

It is  known that there exist global constants $c_1 < c_2$ such that every configuration of $n$ points in general position has more than $c_1^n$ and fewer than $c_2^n$  triangulations, be them partial or full. (In fact, proving this for full triangulations is enough, since partial triangulations of $\CA$ are full triangulations of subconfigurations of $\CA$).
The configurations with the largest and smallest numbers of full triangulations known are the so-called ``Koch chain'' and ``double circle". They have respectively
\[
\Omega^*(9.08^n),
\qquad\text{and}\qquad
\Theta^*((2\sqrt{3})^n) = \Theta^*(3.464^n)
\]
full triangulations, where the notations $\Omega^*(\ )$ and $\Theta^*(\ )$ indicate that a polynomial factor is neglected. See~\cite[Section 3.3]{triangbook} for more information and \cite{lower,upper,RW23} for the original constructions. 

The starting point of this paper is the recent proof by Kupavskii,  Volostnov, and Yarovikov \cite{KVY21} that the configuration  minimizing the number of \emph{partial} triangulations for a given number of points is precisely the convex $n$-gon. Our first observation is that their method of constructing at least Catalan many triangulations for an arbitrary configuration actually produces triangulations that are \emph{regular}. That is:

\begin{theorem}
\label{thm:main}
Any configuration $\CA$ of $n$ points in general position in the plane has at least $C_{n-2}$ regular (partial) triangulations.
\end{theorem}

Observe that the same is not true for full triangulations (the double circle is a counter-example) or for partial triangulations of point sets in non-general position ($n-1$ collinear points plus an extra one is a counter-example).

For the  definition and properties of regular triangulations and subdivisions see Section~\ref{sec:prelim}; one fundamental result regarding them is:

\begin{theorem}[Gelfand, Kapranov and Zelevinsky~\cite{GKZpaper,GKZbook}, see also~\protect{\cite[Thms. 5.1.9 and 5.2.16]{triangbook}}]
\label{thm:secondary}
The decomposition of the space $\R^n$ of height vectors $\omega$ according to what regular subdivision of $\CA$ they produce is a complete polyhedral fan and it is the normal fan of a polytope $\Sigma(\CA)$ of dimension $n-3$, called the \emph{secondary polytope of $\CA$}.
\end{theorem}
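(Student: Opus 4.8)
The plan is to realize the secondary fan as the normal fan of an explicitly built polytope, following the classical route of Gelfand, Kapranov and Zelevinsky through a volume functional; write $d$ for the dimension of the affine span of $\CA$, so $d=2$ in the planar case at hand. First I would attach to each triangulation $T$ its \emph{GKZ vector} $\phi_T\in\R^n$, defined by $(\phi_T)_i=\sum_{\sigma\in T,\,P_i\in\sigma}\vol(\sigma)$, the total volume of the cells having $P_i$ as a vertex, and take as candidate $\Sigma(\CA):=\conv\{\phi_T: T\text{ a triangulation of }\CA\}$. The bridge between liftings and this polytope is the volume functional $\Phi(\omega):=\int_{\conv(\CA)}g_\omega(x)\,dx$, where $g_\omega$ is the convex piecewise-linear lower envelope whose graph is the lower hull of $\tilde\CA$. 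For each fixed $x$ the value $g_\omega(x)$ is a minimum of linear functions of $\omega$, so $\Phi$ is concave and piecewise linear, and its regions of linearity are precisely the sets of $\omega$ that induce a fixed regular subdivision (within such a region $g_\omega$ is affine on every cell, so the integral depends linearly on $\omega$; crossing a wall changes the subdivision).

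Next I would pin down $\Phi$ on each top-dimensional region. When $\omega$ lies in the secondary cone of a triangulation $T$, the function $g_\omega$ is affine on each simplex $\sigma$, interpolating the heights $\omega_i$ at its vertices; since the integral of an affine function over a $d$-simplex is its volume times its value at the barycenter, one gets $\int_\sigma g_\omega = \frac{1}{d+1}\vol(\sigma)\sum_{i\in\sigma}\omega_i$. Summing over the cells of $T$ and regrouping by vertex gives $\Phi(\omega)=\frac{1}{d+1}\langle\omega,\phi_T\rangle$. Because $\Phi$ is concave and coincides with the linear function $\frac{1}{d+1}\langle\cdot,\phi_T\rangle$ on the cone of each $T$, it must be their pointwise minimum,
\[
\Phi(\omega)=\frac{1}{d+1}\min_{T}\langle\omega,\phi_T\rangle\qquad\text{for all }\omega\in\R^n,
\]
so that $(d+1)\Phi$ is the min-support function of $\Sigma(\CA)$. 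The regions of linearity of a support function are exactly the maximal cones of the normal fan of the underlying polytope; combined with the previous paragraph this identifies the secondary decomposition with the normal fan of $\Sigma(\CA)$. In particular it is a complete polyhedral fan, and the order-reversing bijection between its cones and the faces of $\Sigma(\CA)$ refines to the correspondence between regular subdivisions and faces, graded by face dimension, that the rest of the paper exploits.

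For the dimension count, observe that $\Phi$ is unchanged when $\omega$ is replaced by $\omega+(\langle a,P_i\rangle+b)_i$ for any affine function $x\mapsto\langle a,x\rangle+b$, since this adds an affine function to $g_\omega$ without altering the combinatorics of the lower hull. These perturbations form the lineality space $L$ of the fan, of dimension $d+1$ because $\CA$ affinely spans $\R^d$. As the lineality space of a normal fan is the orthogonal complement of the direction space of the polytope's affine hull, we conclude $\dim\Sigma(\CA)=n-(d+1)$, which is $n-3$ for $d=2$.

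The technical heart, and the step I expect to fight hardest, is making the middle argument precise: one must verify that $\Phi$ really is continuous and concave across the walls where the induced subdivision changes, that each region of linearity is full-dimensional modulo $L$ and corresponds to a \emph{distinct} regular subdivision, and that every $\phi_T$ is genuinely a vertex (so no triangulation is lost when passing to the convex hull) while lower-dimensional faces of $\Sigma(\CA)$ match coarser regular subdivisions. Once the support-function identity is secured, the facts that the secondary decomposition is a fan, that it is complete, and that its polytope has dimension $n-3$ all follow formally.
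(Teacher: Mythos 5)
The paper does not prove this theorem --- it is quoted as background from Gelfand--Kapranov--Zelevinsky and from \cite[Thms.\ 5.1.9 and 5.2.16]{triangbook} --- and your argument is precisely the standard proof from those references: GKZ vectors, the concave piecewise-linear volume functional $\Phi$ realizing $\tfrac1{d+1}$ times the min-support function of the convex hull of the GKZ vectors, and the $(d+1)$-dimensional lineality space of affine liftings accounting for the drop from $n$ to $n-(d+1)=n-3$. The sketch is correct in outline and you have flagged the right technical points (that the minimum defining the support function runs over \emph{all} triangulations, which follows since the $T$-interpolant dominates the lower envelope $g_\omega$ pointwise, and that distinct regular triangulations give distinct full-dimensional linearity domains), so there is nothing to add.
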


\begin{example}[The associahedron]
If $\CA$ consists of $n$ points in convex position then every subdivision of $\CA$ is regular, so the face poset of the secondary polytope equals the poset of all polyhedral subdivisions of $\CA$. 
In turn, polyhedral subdivisions of $\CA$ are in bijection to noncrossing sets of diagonals, so that they form a simplicial complex of dimension $n-4$, independent of which particular set of points (in convex position) we started with. Theorem~\ref{thm:secondary} tells us that this simplicial complex is dual to a simple $(n-3)$-polytope, called the $(n-3)$-\emph{associahedron}~\cite{Lee89} (see also \cite[Sect. 1.1]{triangbook} or \cite[pp.18, 306]{Ziegler}). Subdivisions using $k$ diagonals of the $n$-gon correspond to faces of dimension $n-3-k$ in the associahedron.

The Catalan formula for triangulations of the $n$-gon was generalized by Cayley to the following formula counting the number of sets of $n-3-k$ non-crossing diagonals, that is, the number of faces of dimension $k$ of the associahedron; for a proof see, e.g.,~\cite[Theorem 3]{Lee89}, observing that the $j$ there equals  $n-3-k$  here:
\begin{align}
C_{n-2}^k = \frac1{n-1} \binom{n-3}{k} \binom{2n-4-k}{n-2}.
\label{eq:f-vector}
\end{align}

For example, $C_{n-2}^0=C_{n-2}$, corresponding to triangulations, and $C_{n-2}^{n-3}=1$, corresponding to the trivial subdivision. Similarly,
$C_{n-2}^{n-4}=n(n-3)/2$, corresponding to the number of diagonals of the $n$-gon.

The numbers $C_n^k$ form a triangle that appears as sequence A033282 in the Online Encyclopedia of Integer Sequences~\cite{OEIS} and are related to the better known Narayana numbers, which give (among other combinatorial interpretations) the $h$-vector of the associahedron.
\end{example}

Theorem~\ref{thm:secondary} suggests that we can call ``dimension'' of a regular subdivision its dimension as a face of the secondary polytope, and then compare the number of subdivisions of each dimension for a configuration $\CA$ to the same  number for a convex $n$-gon. Our main result is that the ideas in the proof of Theorem~\ref{thm:main} can be adapted to this more general case, and  again the convex $n$-gon minimizes the number of regular subdivisions of each dimension:

\begin{theorem}[Main theorem]
\label{thm:main2}
For every point configuration $\CA$ of size $n$ in general position in the plane, and for every $k\in\{0,\dots,n-3\}$, the secondary polytope $\Sigma(\CA)$ has at least as many faces of dimension $k$ as the secondary polytope of the $n$-gon (the $(n-3)$-associahedron), given by formula \eqref{eq:f-vector}.
\end{theorem}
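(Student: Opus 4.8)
The plan is to construct a rank-preserving injection $\Phi$ from the face poset of the $(n-3)$-associahedron into the face poset of $\Sigma(\CA)$, and then read off Theorem~\ref{thm:main2} rank by rank. By Theorem~\ref{thm:secondary}, the nonempty faces of the associahedron are the (automatically regular) subdivisions $S$ of the convex $n$-gon, the nonempty faces of $\Sigma(\CA)$ are the regular subdivisions of $\CA$, and in both cases the dimension of a face is recorded by its subdivision. The structural tool I would lean on is the standard fact (see \cite{triangbook}) that the face of a secondary polytope indexed by a subdivision $T=\{\CC_1,\dots,\CC_m\}$ is affinely isomorphic to the product $\prod_i \Sigma(\CC_i^{\mathrm{full}})$, where $\CC_i^{\mathrm{full}}$ denotes all points of $\CA$ lying in $\conv(\CC_i)$ (vertices, used non-vertices, and omitted interior points alike), simply because refining $T$ means refining each cell independently. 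Since each planar cell contributes $\dim \Sigma(\CC_i^{\mathrm{full}}) = |\CC_i^{\mathrm{full}}| - 3$, one gets $\dim F_T = \sum_i (|\CC_i^{\mathrm{full}}| - 3)$. Consequently, if $\Phi$ sends a subdivision $S$ with sub-polygons of sizes $m_1,\dots,m_m$ to a regular subdivision $\Phi(S)$ of $\CA$ whose cells satisfy $|\CC_i^{\mathrm{full}}| = m_i$, then $\dim \Phi(S) = \sum_i (m_i - 3) = \dim S$; injectivity then forces $f_k(\Sigma(\CA)) \ge f_k(\text{associahedron})$ for every $k$.

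To build $\Phi$ I would start from the output of Section~\ref{sec:triangs}: the KVY construction, made regular there, provides an injection $\phi$ sending each triangulation $\tau$ of the $n$-gon to a regular triangulation $\phi(\tau)$ of $\CA$ using all $n$ points. This is exactly the case where every cell is a triangle, so $|\CC_i^{\mathrm{full}}| = 3 = m_i$ and the cardinality-matching condition already holds at rank $0$. The task is to extend $\phi$ to coarser subdivisions. I would exploit the recursive (tree-like) way in which the KVY construction assembles a triangulation from its diagonals: a general subdivision $S$ is a refining triangulation with a subset of its diagonals deleted, and running the same recursion but \emph{stopping early} at the diagonals retained by $S$ should merge the corresponding triangles of $\phi(\tau)$ into the larger cells of a coarsening $\Phi(S)$. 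Here Lemma~\ref{lemma:refinement} is the glue: it guarantees that refining each cell of $\Phi(S)$ by a regular subdivision again yields a regular subdivision of $\CA$, so that the regular triangulations of $\CA$ refining $\Phi(S)$ are, as one must check, precisely the images $\phi(\tau)$ of triangulations $\tau$ refining $S$. This makes $\Phi$ a poset embedding: $S \preceq S'$ if and only if $\Phi(S) \preceq \Phi(S')$.

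With such a $\Phi$ in hand the remaining verifications are routine. Injectivity descends from injectivity of $\phi$ on vertices: a face is determined by the set of vertices it contains, $\Phi$ carries the vertex set of $F_S$ (the triangulations refining $S$) bijectively onto that of $F_{\Phi(S)}$, and $\phi$ is injective, so distinct $S$ yield distinct $\Phi(S)$. Rank-preservation is the cardinality-matching computation above, which I would package as the single identity $\dim \Phi(S) = \sum_i(|\CC_i^{\mathrm{full}}|-3) = \dim S$, valid because the recursion is arranged so that each merged cell of $\Phi(S)$ collects exactly as many points of $\CA$ as the corresponding sub-polygon of $S$ has vertices. Reading the resulting inequality off at each rank gives Theorem~\ref{thm:main2}.

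The hard part will be the middle step: showing that the KVY recursion can be halted at an arbitrary non-crossing set of diagonals so as to produce cells of the prescribed cardinalities, and that the resulting decomposition is a genuine regular subdivision. The essential difficulty is that $\CA$ need not contain large subsets in convex position --- by Erd\H{o}s--Szekeres it typically does not --- so the large cells of $\Phi(S)$ cannot be taken in convex position; they must instead absorb interior points of $\CA$, and the count $|\CC_i^{\mathrm{full}}| = m_i$ must come out exactly right once those interior points are distributed among the cells. Checking that the hierarchical structure underlying $\phi$ apportions the interior points so as to match every cell-size profile at once, while keeping each intermediate decomposition regular (via Lemma~\ref{lemma:refinement}) and keeping $\Phi$ injective, is where the bulk of the work lies, and where the construction of Section~\ref{sec:triangs} must be used in its full strength rather than as a black box.
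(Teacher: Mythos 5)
Your plan founders on its very first structural claim. It is \emph{not} a standard fact that the face $F_T$ of $\Sigma(\CA)$ indexed by a regular subdivision $T=\{\CC_1,\dots,\CC_m\}$ is affinely isomorphic to $\prod_i\Sigma(\CC_i^{\mathrm{full}})$, nor that $\dim F_T=\sum_i(|\CC_i^{\mathrm{full}}|-3)$. The linear span of the secondary cone of $T$ is cut out by the $|\CC_i^{\mathrm{full}}|-3$ flatness constraints contributed by each cell, and these constraints need not be independent across cells, so $\sum_i(|\CC_i^{\mathrm{full}}|-3)$ is only an upper bound for $\dim F_T$. The configuration of Figure~\ref{fig:moae} already gives a counterexample: the regular subdivision of the mother of all examples into the inner triangle and the three trapezoids has $\sum_i(|\CC_i^{\mathrm{full}}|-3)=0+1+1+1=3=\dim\Sigma(\CA)$, yet it is a proper (indeed coarse) subdivision, so its face has dimension at most $2$. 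Consequently, even if you manage to arrange $|\CC_i^{\mathrm{full}}|=m_i$ for every cell, the subdivision $\Phi(S)$ may sit in a face of strictly smaller dimension than $\dim S$, and rank preservation --- the whole point of the injection --- fails. Controlling precisely this dimension drop is the main technical content of the paper's Lemma~\ref{lemma:extension}, and it is obtained only for the special ``well-formed'' subdivisions refining an extended star, whose cells admit a shelling order in which each new cell meets the union of the previous ones in at most three points (so that each cell's constraints are genuinely independent of the earlier ones); it is not a property of arbitrary coarsenings of the image triangulations. Your injectivity argument has a related flaw: the vertices of $F_{\Phi(S)}$ are \emph{all} regular triangulations of $\CA$ refining $\Phi(S)$, and a non-convex cell with $m_i$ points generally has more of these than the convex $m_i$-gon (that is the content of the theorem), so $\Phi$ does not carry the vertex set of $F_S$ \emph{onto} that of $F_{\Phi(S)}$.

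Beyond this, the step you yourself flag as hard --- halting the KVY recursion at an arbitrary non-crossing set of diagonals so that the merged cells have prescribed cardinalities, remain a regular subdivision, and yield an order-preserving injection --- is left entirely unresolved, and it is where all the work lies. The paper takes a genuinely different route that avoids constructing any injection: it stratifies regular subdivisions by an extended three-valued link signature $\sigma\in\{0,+1,-1\}^{[n-3]}$ together with a dimension vector $\delta$, proves by induction (Lemma~\ref{lemma:extension} and Corollary~\ref{coro:extension}) that $\CA$ has at least $\prod_i C_{n_i}^{\delta_i}$ well-formed regular subdivisions with each extended signature, all of the correct dimension $|\sigma^0|+\sum_i\delta_i$, and compares this with the exact count for the $n$-gon (Lemma~\ref{lemma:ngon}) via the superadditivity inequality of Lemma~\ref{lemma:catalan2}. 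To rescue your approach you would either have to prove the dimension formula for your specific cells --- essentially redoing Lemma~\ref{lemma:extension} --- or replace the poset embedding by such a stratified counting argument.
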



In the next section we review some concepts and properties from the theory of regular subdivisions. Then, in  Section~\ref{sec:triangs} we prove Theorem~\ref{thm:main}. The proof is essentially that from \cite{KVY21} except that we prove regularity, and that along the way we introduce a formalism aimed at proving the more general Theorem~\ref{thm:main2}. The latter is proved in Section~\ref{sec:subdivs}.



%

One can ask whether there is an analogue of Theorem~\ref{thm:main2} in higher dimension. We include some remarks regarding this question in Section~\ref{sec:higherdim}. For example, we show how the problem, restricted to configurations with four more points than their dimension, is very much connected to the \emph{geodesic crossing number} of the complete graph in the $2$-sphere.

\section{Preliminaries on regular subdivisions}
\label{sec:prelim}
Although our paper (except for the remarks in Section~\ref{sec:higherdim}) deals only with dimension two, let us here recall the formalism of subdivisions and secondary polytopes in arbitrary dimension. A comprehensive reference on the topic is \cite{triangbook}.

Let $\CA=\{P_1,\dots,P_n\}\subset \R^d$ be a point configuration of size $n$, that is, a set of $n$ points from $\R^n$. A polyhedral subdivision $T$ of $\CA$ is, loosely speaking, a decomposition of the convex hull $\conv(\CA)$ into convex polytopes that intersect properly, meaning that the intersection of any two of them is a face of both, and with the property that all vertices of all these polytopes are taken from $\CA$. The individual polytopes of the decomposition are called the \emph{cells} of $T$. 
However, for reasons that will become apparent later, it is convenient to have a more combinatorial definition where a subdivision is not a collection of geometric cells (polytopes) but of ``combinatorial cells'' (subsets of $\CA$):

\begin{definition}[Polyhedral subdivision~\protect{\cite[Theorem 4.55]{triangbook}}]
Let $\CA\subset \R^d$ be a point configuration.
A \emph{(polyhedral) subdivision} of $\CA$ is a collection $T=\{\CC_1,\dots,\CC_k\}$ of subsets of $\CA$, called \emph{cells of $T$} with the following properties:
\begin{enumerate}
    \item Each $\CC_i$ is full dimensional. That is, $\dim(\conv(\CC_i))= d$.
    \item The $\CC_i$ cover $\conv (\CA)$. That is,
    \[\cup_i \conv(\CC_i) = \conv(\CA).\]
    \item The $\CC_i$ intersect properly. That is, for every $i,j$ we have that  $F:=\conv(\CC_i)\cap \conv(\CC_j)$ is a common face of $\conv(\CC_i)$ and $\conv(\CC_j)$ and, moreover,
    \[
    F \cap \CC_i = F \cap \CC_j.
    \]
\end{enumerate}
\end{definition}

We do not require that all the points of $\CA$ are vertices of a cell. So, what we call subdivisions from now on would correspond to the ``partial subdivisions'' in the introduction. Also, observe that a cell $\CC_i\subset \CA$ may contain elements of $\CA$ that are not vertices of $\conv(\CC_i)$. In particular, different ``combinatorial'' subdivisions may correspond to the same ``geometric'' subdivision where by geometric subdivision we mean the collection of subpolytopes $\conv(\CC_i)$.

 Subdivisions of $\CA$ form a poset under refinement, where $T$ refines $T'$ if each cell of $T$ is contained in some cell of $T'$. The unique maximal (i.e., least refined) element in this poset is the trivial subdivision $\{\CA\}$ and the minimal elements are the (partial)  \emph{triangulations} of $\CA$: the subdivisions in which all cells are affinely independent.

A subdivision $T$ of $\CA=\{P_1,\dots,P_n\}$
is \emph{regular} if it can be obtained from a \emph{lifting vector} or \emph{height vector} $\omega=(\omega_1,\dots,\omega_n)\in \R^n$ as follows: Consider the lifted point configuration
\[
\tilde\CA:=\{(P_i,\omega_i): i=1,\dots,n\}\subset \R^{d+1}
\]
and take as cells of the \emph{regular subdivision of $\CA$ induced by $\omega$} the projections in $\CA$ of the lower facets of $\tilde\CA$. Here, a facet of $\tilde\CA$ is called \emph{lower} if the hyperplane containing that facet is not vertical and lies below $\conv(\tilde\CA)$. Observe that this definition makes some points of $\CA$ not part of any cell (those that are not in the lower hull of $\conv(\tilde\CA)$) but it also makes some points to be part of a cell and not vertices of it (those that are in the lower hull of $\conv(\tilde\CA)$ but are not vertices of $\conv(\tilde\CA)$).


Theorem~\ref{thm:secondary} implies that the poset of \emph{regular} subdivisions of $\CA$ under refinement is isomorphic to the lattice of (non-empty) faces of the secondary polytope of $\CA$, of dimension $n-3$. In particular, we have a well-defined \emph{dimension} associated to each regular subdivision $T$: the dimension of the face of $\Sigma(\CA)$ corresponding to $T$.
Regular triangulations have dimension zero (they biject to vertices of $\Sigma(\CA)$) and the trivial subdivision has dimension $n-3$ (it corresponds to $\Sigma(\CA)$ considered as a face of itself). 

To construct regular subdivisions in a controlled way, in this paper we use several times the following lemma:

\begin{lemma}[Regular refinement~\protect{\cite[Lemma 2.3.16]{triangbook}}]
\label{lemma:refinement}
Let $S$ be a regular subdivision of $\CA$, obtained for a certain height vector $\alpha\in \R^n$. Let $\omega\in \R^n$ be another height vector. Then, for any sufficiently small $\epsilon>0$, the regular subdivision of $\CA$ for the height vector $\alpha + \epsilon \omega$ equals the refinement of $S$ obtained subdividing each cell $\CC$ of $S$ as a regular subdivision given with the heights $\omega|_\CC$.
\end{lemma}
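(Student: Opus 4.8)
The plan is to use the standard characterization of the faces of a regular subdivision in terms of supporting affine functionals, and to build, for each cell of the claimed refinement, an explicit functional witnessing that it is a cell of the subdivision induced by $\alpha+\epsilon\omega$. Recall that for a height vector $h\in\R^n$, a subset $\CC\subseteq\CA$ is a (maximal) cell of the regular subdivision induced by $h$ exactly when there is an affine function $\ell\colon\R^d\to\R$ with $\ell(P_i)\le h_i$ for all $i$ and equality precisely for those $i$ with $P_i\in\CC$; such an $\ell$ is the affine function whose graph supports the corresponding lower facet of $\tilde\CA$. Write $T_\epsilon$ for the regular subdivision induced by $\alpha+\epsilon\omega$ and $R$ for the refinement described in the statement. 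I will show that every full-dimensional cell of $R$ is a cell of $T_\epsilon$ once $\epsilon$ is small enough; since the cells of $R$ cover $\conv(\CA)$ and those of $T_\epsilon$ intersect properly, this forces $T_\epsilon=R$.

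Fix a cell $\CC$ of $S$ and let $\ell_\CC$ be its supporting affine function for the lifting $\alpha$, so $\ell_\CC(P_i)=\alpha_i$ for $P_i\in\CC$ and $\ell_\CC(P_i)<\alpha_i$ for $P_i\notin\CC$ (the latter strict because $\CC$ is a maximal cell, hence its lower facet meets the lifted configuration only over $\CC$). Let $\CC'$ be a cell of the regular subdivision of $\CC$ induced by $\omega|_\CC$, with supporting affine function $g$, so that $g(P_i)=\omega_i$ for $P_i\in\CC'$ and $g(P_i)<\omega_i$ for $P_i\in\CC\setminus\CC'$. Consider the affine function $\ell_\CC+\epsilon g$ and evaluate the quantity $(\ell_\CC+\epsilon g)(P_i)-(\alpha_i+\epsilon\omega_i)$ in three cases: for $P_i\in\CC'$ it is $0$; for $P_i\in\CC\setminus\CC'$ it equals $\epsilon\,(g(P_i)-\omega_i)<0$; and for $P_i\notin\CC$ it equals $(\ell_\CC(P_i)-\alpha_i)+\epsilon\,(g(P_i)-\omega_i)$, whose first summand is a fixed negative number while the second tends to $0$ with $\epsilon$. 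Hence for $\epsilon$ small this quantity is $\le 0$ everywhere with equality exactly on $\CC'$, so $\ell_\CC+\epsilon g$ witnesses that $\CC'$ is a cell of $T_\epsilon$, and its combinatorial cell (the equality set) is exactly $\CC'$.

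Because $\CA$ is finite there are only finitely many pairs $(\CC,\CC')$, and each contributes finitely many inequalities of the form $\ell_\CC(P_i)-\alpha_i<\epsilon\,(\omega_i-g(P_i))$ for $P_i\notin\CC$; taking $\epsilon$ below the minimum of the corresponding thresholds makes all of them hold simultaneously. With such an $\epsilon$, every full-dimensional cell $\CC'$ of $R$ is a full-dimensional cell of $T_\epsilon$ carrying the same set of used points ($\CC'$ is full-dimensional because it is a cell of a subdivision of the full-dimensional cell $\CC$). Since the cells of $R$ already cover $\conv(\CA)$ and the cells of $T_\epsilon$ intersect properly, there is no room for any further full-dimensional cell of $T_\epsilon$, and therefore $T_\epsilon=R$, as claimed.

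The main obstacle is the third case above, the points $P_i\notin\CC$: one must check that the perturbation $\epsilon g$ does not lift any such point onto or below the tilted supporting hyperplane. This is exactly where the strict inequality $\ell_\CC(P_i)<\alpha_i$ coming from maximality of $\CC$ in the regular subdivision $S$ is used, together with the finiteness of $\CA$ to obtain a single $\epsilon$ that works for all cells at once.
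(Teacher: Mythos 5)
The paper does not prove this lemma; it quotes it from \cite[Lemma 2.3.16]{triangbook}. Your argument is correct and is essentially the standard proof given there: exhibiting, for each cell $\CC'$ of the claimed refinement, the supporting affine functional $\ell_\CC+\epsilon g$ and using finiteness of $\CA$ to choose one $\epsilon$ uniformly, then concluding by the covering and proper-intersection properties.
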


\section{The number of regular triangulations}
\label{sec:triangs}

Except for the regularity part (the proof of which we partially defer to the next section) in this section we are only rewriting the proof of the main result from \cite{KVY21}. Strictly speaking we could omit this section, since it is nothing but a special case of the result proved in the next one, but we believe  it is worth doing the case of triangulations first, since it is simpler and serves as a warm-up. Also, our way of writing this section is intended to serve as a preparation for the more general case in the next section.

Throughout the paper we let $\CA$ be a point configuration of size $n$ in general position in the plane. Let $A\in \CA$ be a vertex of $\conv (\CA)$ and let $B$ and $C$ be its adjacent vertices. We denote $P_0=B,P_1,\dots,P_{n-2}=C$  the $n-1$ points of $\CA\setminus \{A\}$, ordered counter-clockwise as seen from $A$. 

Any polygonal line from $B=P_0$ to $C=P_{n-2}$, and with vertex set an ordered (as seen from $A$) subset of $\CA\setminus \{A\}$ will be called an \emph{$A$-monotone polygonal line in $\CA$}, or a \emph{monotone polyline} for short. Since every subset of $\CA\setminus \{A\}$ containing $B$ and $C$ is the vertex set of a unique such polyline, there are exactly $2^{n-3}$ monotone polylines for vertex $A$. 

Following  \cite{KVY21}, to each monotone polyline $L$ we associate a signature $\sigma_L\in \{-1,1\}^{[n-3]}$ (that is, we define a sign for each point $P_i\in \CA\setminus \{A,B,C\}$), in the following way.
Write $L$ as a list of its vertices, namely $L=P_{a_0}P_{a_1}\cdots P_{a_{s-1}}P_{a_{s}}$, where $P_{a_0}=B$ and $P_{a_{s}}=C$. Given a point $P_i,1\leq i\leq n-3$, let $P_{a_l}$ and $P_{a_r}$ be the two points of $L$ with $a_l<i<a_r$ such that $r-l$ is minimal. Then, 
\begin{itemize}
    \item if $P_i$ is a point in $L$, we set $\sigma_L(P_i)=1$ if the segments $AP_i$ and $P_{a_l}P_{a_r}$ intersect, and $\sigma_L(P_i)=-1$ otherwise; 
    \item if $P_i$ is not in $L$, we set $\sigma_L(P_i)=-1$ if the segments $AP_i$ and $P_{a_l}P_{a_r}$ intersect, and $\sigma_L(P_i)=1$ otherwise. 
\end{itemize}
We say that the points of $\CA\setminus \{A\}$ with negative signature are \emph{below the polyline $L$} and those with positive signature are \emph{above}.

\begin{example}
In Figure~\ref{fig:step2} we see a configuration of  $22$ points $A,B, P_1,\dots$, $P_{19},B$ and a polyline in it, with vertices $B,P_3,P_6,P_9,P_{12},P_{13},P_{15}$, $P_{19},C$. The signature it produces is\\
\[
\small
\begin{array}{ccccccccccccccccccc}
1&2&3&4&5&6&7&8&9&10&11&12&13&14&15&16&17&18&19\\
\hline
-&+&-&-&-&+&-&-&-&-&-&-&+&-&-&-&-&+&-\\
\end{array}
\]
\end{example}

The following result is \cite[Lemma 2]{KVY21}:

\begin{lemma}
\label{lemma:signatures}
The map so obtained is a bijection between the $A$-monotone polygonal lines and the set $\{-1,+1\}^{[n-3]}$.
\end{lemma}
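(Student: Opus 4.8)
The plan is to leverage the fact that both sides of the claimed bijection have exactly $2^{n-3}$ elements: there is one $A$-monotone polyline for each subset of the interior points $P_1,\dots,P_{n-3}$, and there are $2^{n-3}$ sign vectors in $\{-1,+1\}^{[n-3]}$. Hence it suffices to prove that $L\mapsto\sigma_L$ is injective, or equivalently to exhibit a well-defined inverse that reconstructs $L$ from $\sigma_L$.

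The first step I would carry out is a purely local description of a single sign. Fix an interior point $P_i$ and a polyline $L$, and let $P_{a_l}$ and $P_{a_r}$ be the nearest vertices of $L$ to the left and to the right of $P_i$ in the angular order seen from $A$; these are exactly the straddling vertices in the definition, and they do not depend on whether $P_i$ itself is used. Since $P_{a_l}$ and $P_{a_r}$ lie on opposite angular sides of the ray $AP_i$, and the whole of $\CA\setminus\{A\}$ is seen from the hull vertex $A$ within an angle smaller than $\pi$, the chord $P_{a_l}P_{a_r}$ always meets that ray; consequently $AP_i$ and $P_{a_l}P_{a_r}$ cross \emph{if and only if} $P_i$ lies on the far side of the line through $P_{a_l}P_{a_r}$ from $A$. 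Writing $X_i\in\{0,1\}$ for the indicator of this crossing, the four bullet cases of the definition collapse into the single clean rule that $P_i$ is used in $L$ exactly when $\sigma_L(P_i)=2X_i-1$. The content of this step is that each sign is governed only by the used/unused status of $P_i$ together with the geometric position of the two straddling \emph{used} neighbours.

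With this rule in hand, I would build the inverse by a divide-and-conquer recursion on chords. The endpoints $B=P_0$ and $C=P_{n-2}$ are vertices of every polyline, so I start from the chord $BC$; given a chord $P_lP_r$ whose endpoints are already known to be vertices of $L$, I reconstruct the vertices of $L$ strictly between them, splitting into two smaller chords at each vertex found and terminating because the index range strictly shrinks. The crossing rule is the engine: once the two used neighbours framing a point are correctly identified, the value $\sigma_L(P_i)$ decides unambiguously whether $P_i$ is used. Running this procedure on an arbitrary input $\sigma$ yields at most one polyline, giving injectivity, while feeding back $\sigma_L$ returns $L$, identifying the procedure as the genuine inverse.

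The main obstacle is precisely the non-locality concealed in the crossing rule. Although each individual sign depends only on the two nearest used neighbours, those neighbours are not known in advance, and inserting or deleting a single vertex simultaneously changes the relevant right (or left) neighbour, and hence the sign, of an \emph{entire interval} of points lying beyond it. This is exactly why neither a one-directional sweep nor the naive induction that removes the last interior point $P_{n-3}$ goes through directly: toggling $P_{n-3}$ flips $\sigma_L$ on the whole block of unused points between it and the previous vertex, so equality of two signatures does not descend cleanly to the smaller configuration. The care required is to organise the reconstruction so that whenever a point's status is decided, \emph{both} of its framing used neighbours are already correctly in place; making the chord recursion enforce this, and verifying that the candidate tested at each step is genuinely the next vertex so that no irreversible wrong choice is made, is the technical heart of the argument.
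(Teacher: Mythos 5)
Your reduction and your local analysis are both sound: since each of the $2^{n-3}$ subsets of $\{P_1,\dots,P_{n-3}\}$ determines one polyline and $|\{-1,+1\}^{[n-3]}|=2^{n-3}$, injectivity does suffice; and because $A$ is a hull vertex, so that $\CA\setminus\{A\}$ is seen from $A$ within an angle less than $\pi$, the chord $P_{a_l}P_{a_r}$ really does meet the ray $AP_i$, whence $\sigma_L(P_i)$ encodes exactly the pair (whether $P_i$ is used, whether $P_i$ lies on the far side of the chord of its two nearest used neighbours). The genuine gap is that the proof stops exactly where the lemma begins. The chord recursion is circular as stated: knowing that $P_l$ and $P_r$ are vertices of $L$ does not identify the nearest used neighbours of any $P_i$ with $l<i<r$ unless you already know which points of $(l,r)$ are vertices of $L$ --- which is precisely what the recursion is supposed to determine. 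You flag this yourself (``the technical heart of the argument'') but never supply the missing ingredient: a criterion, readable off $\sigma$ alone, that locates the next vertex of $L$ inside a chord, together with a verification that no wrong candidate can be accepted. Until that is done, no inverse map has been defined, so neither injectivity nor surjectivity has been established. Be aware also that the paper itself offers no formal proof to fall back on: the statement is imported from \cite[Lemma 2]{KVY21}, and only the informal ``rubber band'' heuristic is given here.

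If you want to finish along these lines, the most economical repair is to make the rubber band rigorous rather than to fix the chord recursion: read $\sigma$ as prescribing, for each interior nail $P_i$, on which side of $P_i$ a path from $B$ to $C$ must pass (the $A$-side when $\sigma_i=-1$, the far side when $\sigma_i=+1$), and let $L_\sigma$ be the unique shortest path subject to these constraints (the geodesic in the correspondingly punctured plane). One then checks that $L_\sigma$ is an $A$-monotone polyline, that any nail it touches deflects it from the prescribed side, and that its signature is $\sigma$; uniqueness of the constrained shortest path gives injectivity of $\sigma\mapsto L_\sigma$, hence bijectivity by your counting argument. Whatever route you take, the order in which points' statuses are decided must guarantee that \emph{both} framing used neighbours are already known at decision time; the subdivision into chords does not by itself enforce this, and that is the step your proposal leaves open.
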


The following intuitive idea behind this construction can be considered an informal proof of the lemma: think of the points of $\CA$ as nails on a board and of $L$ as a rubberband that goes below some nails and above some others. Positive points are those that are either strictly above $L$ or where $L$ bends upwards (implying that the rubberband needs to go below the corresponding nails) and negative points are those below $L$ or where $L$ bends downwards.

\begin{example}
When $\CA$ is in convex position, the positive entries in $\sigma_L$ are the internal vertices of $L$ and negative entries are the points not in $L$.
\end{example}

We can apply this construction to triangulations of $\CA$. If $T$ is a triangulation, the link of $A$ in $T$ (that is, the sequence of segments that form a triangle with $A$) is an $A$-monotone polyline $L$. We set $\sigma_T=\sigma_L$ and call this sign vector the \emph{link signature of $T$ from $A$}.

We now prove Theorem~\ref{thm:main}.
Let $\CB=\{A, Q_0,Q_1,\dots$, $Q_{n-3}, Q_{n-2}\}$ be a point configuration in convex position and such that $P_i$ lies in the segment $AQ_i$ for every $i$. (That is, obtain each point $Q_i$ by moving each $P_i$ radially away from $A$ until it becomes a vertex of the configuration). We want to prove that
%
$\CA$ has at least as many regular triangulations as the convex $n$-gon $\CB$.
%
We show this by induction on the number of points of $\CA$,
and stratified by signature. That is, we prove the following statement, which trivially implies Theorem \ref{thm:main}:

\begin{lemma}
\label{lemma:main}
Let $\sigma \in \{-1,+1\}^{[n-3]}$ be a signature. Then, $\CA$ has at least as many regular triangulations with link signature equal to $\sigma$ as $\CB$.
\end{lemma}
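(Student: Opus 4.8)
The plan is to prove Lemma~\ref{lemma:main} by induction on $n=|\CA|$, the base case $n=3$ being a single triangle carrying the unique empty signature. Fix a signature $\sigma$. By Lemma~\ref{lemma:signatures} there is a unique $A$-monotone polyline $L=L_\CA(\sigma)$ realizing $\sigma$, and a triangulation of $\CA$ has link signature $\sigma$ precisely when the link of $A$ equals $L$. So I must count the (regular) triangulations whose $A$-link is exactly $L$, and compare that number with the corresponding count for $L_\CB(\sigma)$ in $\CB$.

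First I would peel off the fan at $A$. Any triangulation with link $L$ consists of the triangles $\conv(A,P_{a_j},P_{a_{j+1}})$ over the edges of $L$ --- inside which the positively-signed non-vertices of $L$ simply sit unused --- together with an arbitrary triangulation of the region $\Pi$ lying on the far side of $L$ from $A$. Since the upper boundary of $\Pi$ is forced to be $L$, this identifies the number of triangulations of $\CA$ with signature $\sigma$ with the number $t(\Pi_\CA(\sigma))$ of partial triangulations of this below-region, the polygon whose upper boundary is $L$. The same reduction applies to $\CB$.

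For $\CB$ the below-region is transparent: every positively-signed point is a vertex of the convex polygon $\CB$ and hence a pinch point where the two boundary chains of $\Pi_\CB(\sigma)$ meet, so $\Pi_\CB(\sigma)$ falls apart into convex sub-polygons, one per maximal run of negatively-signed points between consecutive positive ones. Therefore $t(\Pi_\CB(\sigma))=\prod_j C_{g_j-1}$, a product of Catalan numbers indexed by the gaps $g_j$ cut out by the positive entries of $\sigma$, and the goal reduces to the inequality $t(\Pi_\CA(\sigma))\ge \prod_j C_{g_j-1}$.

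The hard part is exactly this inequality, because in general position $\Pi_\CA(\sigma)$ is a \emph{single}, typically non-convex polygon: a positively-signed point need not be a vertex of $\conv(\CA)$, so $\Pi_\CA(\sigma)$ does not pinch there and the clean product structure of the convex model is lost. To attack it I would set up a secondary induction on $\Pi_\CA(\sigma)$: choose a boundary vertex as a new apex, peel off its first link edge to cut $\Pi_\CA(\sigma)$ into a smaller below-region and a residual configuration, and invoke the inductive hypothesis on each piece. The mismatch between where $\CA$ splits and where the convex model splits would be reconciled through the Catalan convolution $\sum_{k} C_k C_{m-k}=C_{m+1}$: one expands each coarse factor $C_{g_j-1}$ on the $\CB$-side into the sum over all its finer factorizations and shows that the non-convex polygon $\Pi_\CA(\sigma)$ realizes at least one disjoint triangulation family per summand. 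Making this count exact is the crux of the whole argument, and it is where general position and the radial relationship between $\CA$ and $\CB$ must genuinely enter. Finally, regularity is maintained by Lemma~\ref{lemma:refinement}: gluing regular triangulations of the pieces onto the fan produces a regular triangulation of $\CA$, the regularity of the intermediate coarse subdivisions being deferred to the next section as announced.
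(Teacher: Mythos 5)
Your setup---induction on $n$, stratifying by signature, reducing to the region on the far side of the link $L$ from $A$, and computing the exact count $\prod_j C_{m_j}$ for the convex configuration $\CB$---matches the paper. But the core of the argument is missing, and the route you sketch for it would not work as stated. You propose to lower-bound the number of partial triangulations of the \emph{single non-convex} below-region $\Pi_\CA(\sigma)$ by a secondary induction reconciled via the Catalan convolution $\sum_k C_kC_{m-k}=C_{m+1}$; you yourself flag this as ``the crux'' and leave it unproved. The paper never triangulates the whole below-region. The key object you are missing is the \emph{negative intervals of $\sigma$ with respect to $\CA$}: the maximal chains of $L$ that are concave as seen from $A$, each together with the negative points lying under it. These are convex sub-polygons $\CP_1,\dots,\CP_k$ of the below-region, and the decisive observation is that their lengths $(n_1,\dots,n_k)$ form a \emph{coarsening} of the lengths $(m_1,\dots,m_\ell)$ of the negative runs of $\sigma$ (which are the negative intervals with respect to $\CB$), because a positive point of $\sigma$ that is not a vertex of $L$ fails to split the interval in $\CA$. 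One then only varies the triangulations of the $\CP_i$ inside one fixed regular subdivision $S$ containing them as cells, getting at least $\prod_i C_{n_i}$ triangulations with signature $\sigma$, and concludes by the super-multiplicativity inequality $C_{k_1}\cdots C_{k_m}\le C_{k_1+\cdots+k_m}$ (Lemma~\ref{lemma:catalan}) applied to the coarsening---not by the convolution identity, which is the wrong tool here since only an inequality in one direction is needed and available.

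There is also a problem with how you handle regularity. Lemma~\ref{lemma:refinement} lets you refine a \emph{regular subdivision} cell by cell, but a subdivision's cells must be convex (they are sets $\CC_i$ with $\conv(\CC_i)$ intersecting properly), so ``the fan at $A$ plus the non-convex region $\Pi_\CA(\sigma)$'' is not a polyhedral subdivision and cannot serve as the coarse object to be refined. The paper's Claim (a special case of Lemma~\ref{lemma:extended_star}) supplies exactly what is needed: a regular subdivision $S$ of $\CA$ containing the triangles $A P_{a_{i-1}}P_{a_i}$ and the convex cells $\CP_i$; the leftover shaded regions are subdivided in some fixed, irrelevant way. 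Without the negative-interval decomposition and this regular coarse subdivision, your induction has nothing convex to recurse into and nothing regular to refine.
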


In the proof we will need the following inequality between Catalan numbers.
The inequality follows from the definition of $C_n$, or from the fact that a convex $(k_1+\dots+k_m+2)$-gon can be subdivided into $m$ convex polygons of sizes $k_1+2, \dots, k_m+2$, and then these polygons can be triangulated independently:

\begin{lemma}[{\cite[Corollary 1]{KVY21}}]
\label{lemma:catalan}
For integers $k_1,\dots,k_m$, we have 
\[
C_{k_1}\cdots C_{k_m}\leq C_{k_1+\dots+k_m}.
\]
\end{lemma}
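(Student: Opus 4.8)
The plan is to prove the inequality combinatorially, interpreting each Catalan number as a count of triangulations. Recall that a convex $r$-gon has exactly $C_{r-2}$ triangulations, so $C_{k_i}$ is the number of triangulations of a convex $(k_i+2)$-gon. I would therefore realize the product $C_{k_1}\cdots C_{k_m}$ as the number of triangulations of a single large polygon that respect a fixed coarse subdivision into $m$ pieces of the prescribed sizes, and then observe that these form a subset of all triangulations of that polygon.

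Concretely, set $N=k_1+\dots+k_m$ and fix a convex $(N+2)$-gon $\CP$. First I would exhibit a set $D$ of $m-1$ pairwise noncrossing diagonals of $\CP$ that cut it into $m$ convex subpolygons $\CP_1,\dots,\CP_m$ with $\CP_i$ having $k_i+2$ vertices; a path-like (fan) arrangement works, in which consecutive pieces share exactly one diagonal. The size bookkeeping is the one point to check carefully: gluing the $m$ pieces along $m-1$ shared edges yields $\sum_i (k_i+2) - 2(m-1) = N+2$ vertices, matching $\CP$, which confirms that such a $D$ exists (degenerate factors with $k_i=0$ correspond to a piece of size $2$, i.e. a single edge, and cause no trouble).

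Next I would argue that the triangulations of $\CP$ containing all the diagonals in $D$ are in bijection with $m$-tuples of triangulations, one of each piece $\CP_i$: a triangulation refining the subdivision $\{\CP_1,\dots,\CP_m\}$ is obtained by triangulating each piece independently and taking the union, and conversely. Hence the number of such triangulations is exactly $\prod_i C_{k_i}$. Because these are a subset of the set of all triangulations of $\CP$, whose cardinality is $C_N$, the inequality $C_{k_1}\cdots C_{k_m}\le C_N$ follows immediately.

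The argument has essentially no hard step: the only things requiring care are the off-by-two indexing between polygon size and Catalan index, and the handling of the degenerate factors $C_0=1$. An alternative, purely algebraic route would expand $C_n=\frac{1}{n+1}\binom{2n}{n}$ and induct on $m$, reducing everything to the two-term case $C_aC_b\le C_{a+b}$; but since that two-term case is itself just the $m=2$ instance of the construction above, the direct combinatorial proof is cleaner and is the one I would record.
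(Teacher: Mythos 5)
Your proof is correct and is essentially the same argument the paper gives: it justifies the inequality by subdividing a convex $(k_1+\dots+k_m+2)$-gon into $m$ convex polygons of sizes $k_1+2,\dots,k_m+2$ and triangulating these independently. Your additional care with the vertex-count bookkeeping and the degenerate case $C_0=1$ is welcome but does not change the approach.
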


\begin{proof}[Proof of Lemma~\ref{lemma:main}  and Theorem~\ref{thm:main}]
Let $L=P_{a_0}P_{a_1}\cdots P_{a_{s-1}}P_{a_s}$ be the polyline associated to the signature $\sigma$ in $\CA$, with $B=P_{a_0}$ and $C=P_{a_s}$. We call a \emph{negative interval} of 
$\sigma$ (with respect to $\CA$)  each subset 
\[
\{i,j\}\cup ([i,j] \cap \sigma^-) ,
\]
where $a_i$ and $a_j$ are two consecutive points with non-negative signature along $L$. For the purpose of this definition, $B=P_{a_0}$ and $C=P_{a_s}=P_{n-2}$ are considered non-negative points in $\sigma$, so that the first negative interval starts at $0$ and the last one ends at $n-2$. We call \emph{length} of a negative interval its number of negative points, that is, the cardinality of $[i,j] \cap \sigma^-$. A negative interval of length $l$ has $l+2$ points,  the $l$ negative ones plus its  two end-points $i$ and $j$.

Observe that each negative interval corresponds to a maximal concave (as seen from $A$) chain in $L$. Here we call a chain $P_{a_i}\dots P_{a_j}$ in $L$ concave if for each intermediate point $P_{a_k}$ in the chain we have that $P_{a_k}$ is inside the triangle $AP_{a_{k-1}}P_{a_{k+1}}$ or, equivalently, if $L$ bends downwards at $P_{ak}$.
Being maximal implies that $P_{a_i}$ and $P_{a_j}$ are non-negative while the rest of the points in the chain are negative, and the negative interval corresponding to the chain is 
\[
\{i,j\}\cup ([i,j] \cap \sigma^-) .
\]

Let $\CP_1,\dots,\CP_l$ be the subconfigurations of $\CA$ consisting of the points in each negative interval. Also, for each segment $P_{a_{i-1}}P_{a_{i}}$ in $L$ ($i=1,\dots,s$)
let $\CR_i$ be the triangle $\{A,P_{a_{i-1}},P_{a_{i}}\}$.
The following claim is a particular case of Lemma~\ref{lemma:extended_star} that we prove in the next section. See Figure~\ref{fig:step2} for an illustration:

\medskip
\noindent{\bf Claim:}
There is a regular subdivision $S$ of $\CA$ containing as cells all the negative intervals $\CP_i$ and triangles $\CR_j$.

\medskip

Let $\alpha \in \R^n$ be a height function producing $S$ as a regular subdivision of $\CA$.

\begin{figure}[htb]
\centering
%

\begin{tikzpicture}[scale=0.5]
\fill (8,3) circle(7pt) coordinate (A) node[anchor=270]{$A$}
      (0,0) circle(4pt) coordinate (B) node[anchor=0]{$B$}
      (19,2) circle(4pt) coordinate (C) node[anchor=180]{$C$}
      (0,-3) circle(4pt) coordinate (P1) node[anchor=45]{$P_{1}$}
      (6,-2) circle(4pt) coordinate (P3) node[anchor=45]{$P_{3}$}
      (4,-8) circle(4pt) coordinate (P4) node[anchor=20]{$P_{4}$}
      (6,-7) circle(4pt) coordinate(P5) node[anchor=-20]{$P_{5}$}
      (7,-5) circle(4pt) coordinate (P6) node[anchor=-20]{$P_{6}$}
      (9,-3) circle(4pt) coordinate (P7) node[anchor=20]{$P_{7}$}
      (10,-6) circle(4pt) coordinate (P8) node[anchor=50]{$P_{8}$}
      (9,0) circle(4pt) coordinate (P9) node[anchor=10]{$P_{9}$}
      (12,-5) circle(4pt) coordinate (P10) node[anchor=20]{$P_{10}$}
      (16,-7) circle(4pt) coordinate (P11) node[anchor=100]{$P_{11}$}
      (10,1) circle(4pt) coordinate (P12) node[anchor=110]{$P_{12}$}
      (13,-1) circle(4pt) coordinate (P13) node[anchor=50]{$P_{13}$}
      (19,-4) circle(4pt) coordinate (P14) node[anchor=130]{$P_{14}$}
      (14,0) circle(4pt) coordinate (P15) node[anchor=130]{$P_{15}$}
      (18,-1) circle(4pt) coordinate (P16) node[anchor=120]{$P_{16}$}
      (20,-1) circle(4pt) coordinate (P17) node[anchor=180]{$P_{17}$}
      (16,1) circle(4pt) coordinate (P19) node[anchor=120]{$P_{19}$};
\draw (6,0) circle(4pt) coordinate (P2) node[anchor=20]{$P_{2}$}
      (12,1.5) circle(4pt) coordinate (P18) node[anchor=160]{$P_{18}$};
\draw (A) -- (B) -- (P1) -- (P4) -- (P11) -- (P14) -- (P17) -- (C) -- (A);
\draw[very thick] (B) -- (P3) -- (P6) -- (P9) -- (P12) -- (P13) -- (P15) -- (P19) -- (C);
\draw (A) -- (P6)
      (A) -- (P13)
      (A) -- (P3)
      (A) -- (P9)
      (A) -- (P12)
      (A) -- (P15)
      (A) -- (P19)
      (P4) -- (P5) -- (P6) -- (P8) -- (P11) -- (P13) -- (P14);

\begin{scope}[on background layer]
        \filldraw [gray!40] (P4) -- (P5) -- (P6) -- (P8) -- (P11) -- (P4);
        \filldraw [gray!40] (P11) -- (P13) -- (P14) -- (P11);
\end{scope}

\end{tikzpicture}

\caption{Illustration of the proof of Lemma~\ref{lemma:main} with $\sigma$
positive at $2$, $6$, $13$ and $18$, and negative in the rest of the $P_i$. The ``white dots'' at $P_2$ and $P_{18}$ indicate points that are not used.
The link $L$ of $A$ is marked thicker. The white polygons above and below $L$ are the $\CR_i$ and $\CP_j$ respectively, and they are part of the regular subdivision $S$ constructed in the proof. The way in which the shaded regions are subdivided in $S$ is not determined.}
\label{fig:step2}
\end{figure}

By inductive hypothesis, we  assume that each polygon $\CP_i$ has at least $C_{n_i}$ regular triangulations, where $n_i$ is its length.
Fix one such regular triangulation $T_i$ for each $\CP_i$. We can assume that the height function producing it
is $0$ for the first and last point in $\CP_i$, which are the only points it has in common with the rest of $\CP_j$'s. Thus, there is a global height function $\omega\in \R^n$ that restricted to each $\CP_i$ produces the regular triangulation $T_i$.

Then, by Lemma~\ref{lemma:refinement}, we can choose independently a regular triangulation for each of the polygons $\CP_i$ to obtain many different regular triangulations of $\CA$ with the given signature. Since each $\CP_i$ has at least $C_{n_i}$ triangulations (by inductive hypothesis), we get at least $\prod_{i}C_{n_i}$ regular triangulations of $\CA$ with signature $\sigma$, where $n_1,\dots,n_k$ are the lengths of the negative intervals of $\sigma$ in $\CA$. 

Now we look at $\CB$. We can consider the polyline $L'$ induced by $\sigma$ in $\CB$, and its negative intervals. Let $m_1,\dots,m_\ell$ be their lengths. As above, we have that $\CB$ has at least $\prod_{j}C_{m_j}$ triangulations with signature $\sigma$; but we can now also argue that the count is exact. Indeed, above the polyline all triangulations with a given signature are the same. Below the polyline what we have are convex polygons of sizes $m_1,\dots,m_\ell$, so the number of ways of refining them to triangulations is exactly 
$\prod_{j}C_{m_j}$.

Thus, $\CA$ has at least $\prod_{i}C_{n_i}$ regular triangulations of signature $\sigma$ and $\CB$ has exactly $\prod_{j}C_{m_j}$ of them.
What remains to be shown is that 
\begin{align}
\prod_{i}C_{n_i} \ge \prod_{j}C_{m_j}.
\label{eq:catalan}
\end{align}
This inequality follows from the following remark: the negative intervals of $\sigma$ in $\CB$ are simply the ``negative intervals of $\sigma$'' in the standard sense; that is, each one starts and ends with a pair of consecutive non-negative entries of $\sigma$. Put differently, the negative intervals with respect to $\CB$, considered as an ordered partition of $|\sigma^-|$, form a refinement of the negative intervals with respect to $\CA$.
In the refinement process the lengths of intervals in $\CA$ are decomposed as sums of lengths of intervals in $\CB$. 
Thus, the inequality \eqref{eq:catalan} follows from applying Lemma~\ref{lemma:catalan} to each negative interval of $\CA$.
\end{proof}

\section{The number of regular subdivisions}
\label{sec:subdivs}

\subsection{Extended signatures and extended stars}

In order to prove Theorem~\ref{thm:main2} we need to extend to arbitrary subdivisions the formalism of link signatures, and introduce several additional concepts.

As in the previous section, we let $\CA$ be an arbitrary point configuration in general position in the plane and with $n$ points, let $B,A, C\in \CA$ be three consecutive vertices of its convex hull and we let $B=P_0,\dots, P_{n-2}=C$ be the list of points in $\CA\setminus \{A\}$, ordered as seen from $A$. We also let $\CB$ be a configuration in convex position obtained moving each point $P_i$ to a new position $Q_i$ along the ray from $A$ through $P_i$.

Let $T$ be an arbitrary subdivision of $\CA$ or $\CB$. We define its \emph{link signature} $\sigma_T\in \{0,+1,-1\}^{[n-3]}$ modifying the definition above as follows: first, the link $L$ of $A$ in $T$ divides the $n-3$ points into points ``above'' and ``below'' $L$, exactly as in Section 2. The points below the link get negative sign in $\sigma_T$. However, the points above can get either positive or zero sign, depending on the following:
\begin{itemize}
\item We give positive sign to the points that either form an edge with $A$ or are not used in the subdivision $T$.
\item We give zero sign to the points that are used but do not form an edge with $A$.
\end{itemize}
See Figure~\ref{fig:extended_star_plus} for an example.

\begin{figure}[htb]
\centering
%

\begin{tikzpicture}[scale=0.5]
\fill (8,3) circle(7pt) coordinate (A) node[anchor=270]{$A$}
      (0,0) circle(4pt) coordinate (B) node[anchor=0]{$B$}
      (19,2) circle(4pt) coordinate (C) node[anchor=180]{$C$}
      (0,-3) circle(4pt) coordinate (P1) node[anchor=45]{$P_{1}$}
      (6,-2) circle(4pt) coordinate (P3) node[anchor=45]{$P_{3}$}
      (4,-8) circle(4pt) coordinate (P4) node[anchor=20]{$P_{4}$}
      (6,-7) circle(4pt) coordinate(P5) node[anchor=-20]{$P_{5}$}
      (7,-5) circle(4pt) coordinate (P6) node[anchor=-20]{$P_{6}$}
      (9,-3) circle(4pt) coordinate (P7) node[anchor=20]{$P_{7}$}
      (10,-6) circle(4pt) coordinate (P8) node[anchor=50]{$P_{8}$}
      (9,0) circle(4pt) coordinate (P9) node[anchor=10]{$P_{9}$}
      (12,-5) circle(4pt) coordinate (P10) node[anchor=20]{$P_{10}$}
      (16,-7) circle(4pt) coordinate (P11) node[anchor=100]{$P_{11}$}
      (10,1) circle(4pt) coordinate (P12) node[anchor=110]{$P_{12}$}
      (13,-1) circle(4pt) coordinate (P13) node[anchor=50]{$P_{13}$}
      (19,-4) circle(4pt) coordinate (P14) node[anchor=130]{$P_{14}$}
      (14,0) circle(4pt) coordinate (P15) node[anchor=130]{$P_{15}$}
      (18,-1) circle(4pt) coordinate (P16) node[anchor=120]{$P_{16}$}
      (20,-1) circle(4pt) coordinate (P17) node[anchor=180]{$P_{17}$}
      (12,1.5) circle(4pt) coordinate (P18) node[anchor=160]{$P_{18}$}
      (16,1) circle(4pt) coordinate (P19) node[anchor=120]{$P_{19}$};
\draw (6,0) circle(4pt) coordinate (P2) node[anchor=20]{$P_{2}$};
\draw (A) -- (B) -- (P1) -- (P4) -- (P11) -- (P14) -- (P17) -- (C) -- (A);
\draw[very thick] (B) -- (P3) -- (P6) -- (P9) -- (P12) -- (P13) -- (P15) -- (P19) -- (C);
\draw (A) -- (P3)
      (A) -- (P9)
      (A) -- (P12)
      (A) -- (P13)
      (A) -- (P15)
      (A) -- (P19)
      (P4) -- (P5) -- (P6) -- (P8) -- (P11) -- (P13) -- (P14);

\begin{scope}[on background layer]
        \filldraw [gray!40] (P4) -- (P5) -- (P6) -- (P8) -- (P11) -- (P4);
        \filldraw [gray!40] (P11) -- (P13) -- (P14) -- (P11);
\end{scope}

\end{tikzpicture}

\caption{An extended star (white region, subdivided into cells), for a $\sigma$ with two positive entries ($P_2$ and $P_{13}$), two zero entries ($P_6$ and $P_{18}$), and negative in the rest of the $P_i$}
\label{fig:extended_star_plus}
\end{figure}

\begin{example}
If $T$ is a triangulation then all  points along but ``above'' $L$ form edges with $A$ and all points strictly above $L$ are unused in $T$, so $\sigma_T$ has no zero entries.
That is, the new definition of link signature for subdivisions is consistent with the one for triangulations in the previous section. 
\end{example}

\begin{example}
For points in convex position, the points above $L$ that are used are exactly the ones that form an edge with $A$. That is, in this case, each point $P_1,\dots,P_{n-3}$ receives a positive sign if $AP_i$ is an edge in $T$; $0$ if $P_i$ is in some cell of $T$ containing $A$ but $AP_i$ is not an edge; and negative sign if $P_i$ is not in a cell with $A$. 
\end{example}

In the following statement, we call \emph{star} of $A$ in a subdivision $T$ the collection of cells of $T$ that contain $A$.

\begin{lemma}
\label{lemma:signatures2}
For every configuration $\CA\subset \R^2$ of $n$ points in general position,
the above rules provide a bijection between the possible stars of $A$ in subdivisions of $\CA$ and their link signatures $\sigma \in \{0,+1,-1\}^{[n-3]}$.
\end{lemma}

\begin{proof}
We know how to construct the link signature from the subdivision; let us see how to recover the star of $A$ in a subdivision $T$ knowing only the link signature $\sigma_T \in \{0,+1,-1\}^{[n-3]}$. 

Taking all zeroes as if they were $+1$ gives us the polyline $L$ from $\sigma_T$. This polyline is the boundary of the star of $A$ and the only extra information that we need is (a) how to partition $L$ into sub-polylines corresponding to the individual cells in the star and (b) which points strictly above the polyline are used or not used in $T$. Both pieces of information are contained in $\sigma_T$:
for (a) we only need to know which vertices of $L$ are joined to $A$ by edges, and these are precisely the positive points along $L$. For (b), the link signature tells us which points strictly above the polyline are to be used (points with zero signature) or not used (points with positive signature) in the subdivision. 
\end{proof}

Notice how, both in the arbitrary and in the convex configurations, each ``$0$'' in the signature acts like a ``$+1$'' in terms of the polyline, but it increases by one the dimension of the subdivision in the secondary polytope (provided that the subdivision is regular). Thus, a regular subdivision with signature $\sigma$ has dimension at least $|\sigma^0|$ as a face of the secondary polytope. Here and elsewhere we denote $\sigma^0$, $\sigma^+$ and $\sigma^-$ the zero, positive, and negative parts of $\sigma$. That is,
\[
\sigma^\varepsilon :=\{i \in [n-3]: \sigma(i) = \varepsilon\}.
\]

Figure~\ref{fig:hexagon} shows the nine coarse subdivisions of a hexagon (corresponding to the nine facets of the three-dimensional associahedron) each with its  signature.

\begin{figure}[htb]
\centering
%

\newcommand{\hexagon}
{
\fill (0,7) circle(7pt) coordinate (A) node[anchor=south]{$A$}
      (3,5) circle(4pt) coordinate (C) node[anchor=west]{$C$}
      (3,2) circle(4pt) coordinate (P3) node[anchor=west]{$P_3$}
      (0,0) circle(4pt) coordinate(P2) node[anchor=north]{$P_2$}
      (-3,2) circle(4pt) coordinate(P1) node[anchor=east]{$P_1$}
      (-3,5) circle(4pt) coordinate(B) node[anchor=east]{$B$};
\draw (C) -- (A) -- (B) -- (P1) -- (P2) -- (P3) -- (C);
}

\begin{multicols}{3}
\begin{tikzpicture}[scale=0.30]
\hexagon 
\draw (A) -- (P1);
\draw [very thick] (B) -- (P1) -- (P2) -- (P3) -- (C);
\end{tikzpicture}
\vskip-22pt
\[
(+1,0,0)
\]
\begin{tikzpicture}[scale=0.30]
\hexagon 
\draw [very thick] (B) -- (P2) -- (P3) -- (C);
\end{tikzpicture}
\vskip-22pt
\[
(-1,0,0)
\]
\begin{tikzpicture}[scale=0.30]
\hexagon
\draw [very thick] (B) -- (P3) -- (C);
\end{tikzpicture}
\vskip-22pt
\[
(-1,-1,0)
\]

\columnbreak
\begin{tikzpicture}[scale=0.30]
\hexagon 
\draw (A) -- (P2);
\draw [very thick] (B) -- (P1) -- (P2) -- (P3) -- (C);
\end{tikzpicture}
\vskip-22pt
\[
(0,+1,0)
\]
\begin{tikzpicture}[scale=0.30]
\hexagon 
\draw [very thick] (B) -- (P1) -- (P3) -- (C);
\end{tikzpicture}
\vskip-22pt
\[
(0,-1,0)
\]
\begin{tikzpicture}[scale=0.30]
\hexagon 
\draw [very thick] (B) -- (C);
\end{tikzpicture}
\vskip-22pt
\[
(-1,-1,-1)
\]

\columnbreak
\begin{tikzpicture}[scale=0.30]
\hexagon 
\draw (A) -- (P3);
\draw [very thick] (B) -- (P1) -- (P2) -- (P3) -- (C);
\end{tikzpicture}
\vskip-22pt
\[
(0,0,+1)
\]
\begin{tikzpicture}[scale=0.30]
\hexagon 
\draw [very thick] (B) -- (P1) -- (P2) -- (C);
\end{tikzpicture}
\vskip-22pt
\[
(0,0,-1)
\]
\begin{tikzpicture}[scale=0.30]
\hexagon 
\draw [very thick] (B) -- (P1) -- (C);
\end{tikzpicture}
\vskip-22pt
\[
(0,-1,-1)
\]

\end{multicols}

\caption{The nine coarse subdivisions of a hexagon, each with its extended signature. The point $A$ is the top one, displayed with a thicker dot.
}
\label{fig:hexagon}
\end{figure}

Figure~\ref{fig:moae} shows the ten regular coarse subdivisions of the so-called ``mother of all examples'', consisting of the vertices of two concentric parallel triangles, each with its extended signature. 

\begin{figure}[htb]
%

\newcommand{\moae}
{
\fill (90:5) circle(7pt) coordinate (A) node[anchor=180]{$A$}
      (210:5) circle(4pt) coordinate (B) node[anchor=-20]{$B$}
      (330:5) circle(4pt) coordinate (C) node[anchor=200]{$C$};
\draw (A) -- (B) -- (C) -- (A);
}

\begin{multicols}{3}
\begin{tikzpicture}[scale=0.30]
\moae
\fill (90:1.5) circle(4pt) coordinate (P2) node[anchor=-90]{$P_2$}
      (330:1.5) circle(4pt) coordinate (P3) node[anchor=200]{$P_3$};
\draw (210:1.5) circle(4pt) coordinate (P1) node[anchor=-20]{$P_1$};
\draw [very thick] (B) -- (C);
\end{tikzpicture}
\vskip-22pt
\[
(+1,0,0)
\]
\begin{tikzpicture}[scale=0.30]
\moae
\fill (210:1.5) circle(4pt) coordinate (P1) node[anchor=-35]{$P_1$}
      (90:1.5) circle(4pt) coordinate (P2) node[anchor=-90]{$P_2$}
      (330:1.5) circle(4pt) coordinate (P3) node[anchor=200]{$P_3$};
\draw (A) -- (P1);
\draw [very thick] (B) -- (P1) -- (C);
\end{tikzpicture}
\vskip-22pt
\[
(-1,0,0)
\]
\begin{tikzpicture}[scale=0.30]
\moae
\fill (210:1.5) circle(4pt) coordinate (P1) node[anchor=225]{$P_1$}
      (90:1.5) circle(4pt) coordinate (P2) node[anchor=-135]{$P_2$}
      (330:1.5) circle(4pt) coordinate (P3) node[anchor=200]{$P_3$};
\draw (A) -- (P2)
      (B) -- (P3);
\draw [very thick] (B) -- (P2) -- (P3) -- (C);
\end{tikzpicture}
\vskip-22pt
\[
(-1,-1,0)
\]
\vfill
\columnbreak
\begin{tikzpicture}[scale=0.30]
\moae
\fill (210:1.5) circle(4pt) coordinate (P1) node[anchor=-20]{$P_1$}
      (330:1.5) circle(4pt) coordinate (P3) node[anchor=200]{$P_3$};
\draw (90:1.5) circle(4pt) coordinate (P2) node[anchor=-90]{$P_2$};
\draw (A) -- (B) -- (C) -- (A);
\draw [very thick] (B) -- (C);
\end{tikzpicture}
\vskip-22pt
\[
(0,+1,0)
\]
\begin{tikzpicture}[scale=0.30]
\moae
\fill (210:1.5) circle(4pt) coordinate (P1) node[anchor=-20]{$P_1$}
      (90:1.5) circle(4pt) coordinate (P2) node[anchor=-135]{$P_2$}
      (330:1.5) circle(4pt) coordinate (P3) node[anchor=200]{$P_3$};
\draw (P1) -- (P3)
      (A) -- (P2);
\draw [very thick] (B) -- (P1) -- (P2) -- (P3) -- (C);
\end{tikzpicture}
\vskip-22pt
\[
(0,-1,0)
\]
\begin{tikzpicture}[scale=0.30]
\moae
\fill (210:1.5) circle(4pt) coordinate (P1) node[anchor=90]{$P_1$}
      (90:1.5) circle(4pt) coordinate (P2) node[anchor=-135]{$P_2$}
      (330:1.5) circle(4pt) coordinate (P3) node[anchor=90]{$P_3$};
\draw (A) -- (P2);
\draw [very thick] (B) -- (P2) -- (C);
\end{tikzpicture}
\vskip-22pt
\[
(-1,-1,-1)
\]
\begin{tikzpicture}[scale=0.30]
\moae
\fill (210:1.5) circle(4pt) coordinate (P1) node[anchor=-20]{$P_1$}
      (90:1.5) circle(4pt) coordinate (P2) node[anchor=-90]{$P_2$}
      (330:1.5) circle(4pt) coordinate (P3) node[anchor=200]{$P_3$};
\draw (A) -- (P1)
      (A) -- (P3);
\draw [very thick] (B) -- (P1) -- (P3) -- (C);
\end{tikzpicture}
\vskip-22pt
\[
(-1,0,-1)
\]

\columnbreak
\begin{tikzpicture}[scale=0.30]
\moae
\fill (90:1.5) circle(4pt) coordinate (P2) node[anchor=-90]{$P_2$}
      (210:1.5) circle(4pt) coordinate (P1) node[anchor=-20]{$P_1$};
\draw (330:1.5) circle(4pt) coordinate (P3) node[anchor=200]{$P_3$};
\draw (A) -- (B) -- (C) -- (A);
\draw [very thick] (B) -- (C);
\end{tikzpicture}
\vskip-22pt
\[
(0,0,+1)
\]
\begin{tikzpicture}[scale=0.30]
\moae
\fill (210:1.5) circle(4pt) coordinate (P1) node[anchor=-20]{$P_1$}
      (90:1.5) circle(4pt) coordinate (P2) node[anchor=-90]{$P_2$}
      (330:1.5) circle(4pt) coordinate (P3) node[anchor=200]{$P_3$};
\draw (A) -- (P3);
\draw [very thick] (B) -- (P3) -- (C);
\end{tikzpicture}
\vskip-22pt
\[
(0,0,-1)
\]
\begin{tikzpicture}[scale=0.30]
\moae
\fill (210:1.5) circle(4pt) coordinate (P1) node[anchor=-30]{$P_1$}
      (90:1.5) circle(4pt) coordinate (P2) node[anchor=-135]{$P_2$}
      (330:1.5) circle(4pt) coordinate (P3) node[anchor=-45]{$P_3$};
\draw (A) -- (P2)
      (C) -- (P1);
\draw [very thick] (B) -- (P1) -- (P2) -- (C);
\end{tikzpicture}
\vskip-22pt
\[
(0,-1,-1)
\]
\vfill
\color{white}{*}
\end{multicols}

\caption{The ten coarse regular subdivisions of ``the mother of all examples'', each with its  signature. Each of the three subdivisions in the first row has an unused point, displayed as a white dot.
}
\label{fig:moae}
\end{figure}

Our proof of Theorem~\ref{thm:main2} is again be stratified by signature. That is, we  show that $\CA$ has at least as many regular subdivisions as $\CB$ for each possible dimension (as a face of the secondary polytope) and signature
$\sigma\in\{-1,0,1\}^{[n-3]}$. 
This can be seen in Figures~\ref{fig:hexagon} and \ref{fig:moae}: the former contains nine subdivisions with nine different signatures and the latter contains these same nine plus an additional tenth signature.

In fact, the proof is  stratified by \emph{extended star} according to the following definitions. 

\begin{definition}[Negative intervals, extended star]
\label{defi:intervals}
Let $\sigma\in \{0,+1,-1\}^{[n-3]}$ be a signature and $L$ its corresponding polyline in a configuration $\CA$. 

We call negative intervals of $\sigma$ (with respect to $\CA$) the subsets 
\[
\{i,j\}\cup [i,j] \cap \sigma^- ,
\]
where $i$ and $j$ are two consecutive non-negative entries of $\sigma$ along the polyline. 

The \emph{extended star} induced by $\sigma$ in $\CA$ consists of the following two types of cells:
\begin{enumerate}
    \item The cells in the ``star of $A$'' corresponding to $\sigma$, as given by the bijection of Lemma~\ref{lemma:signatures2}. We call these the \emph{cells above $L$}.
    \item The negative intervals. We call these the \emph{cells below $L$}.
\end{enumerate}
\end{definition}

\begin{remark}
The cells of the extended star above $L$ could well be called ``zero intervals'', since each of them consists of the zero entries in $\sigma$ between any two consecutive points $P_i$ and $P_j$ in $L$ with non-zero signature (including $A$ and $P_i$ and $P_j$ as elements of the interval).

If $T$ is a triangulation, these points include all the points along $L$, so the cells above $L$ are exactly the triangles $\CR_j$ in the proof of Lemma~\ref{lemma:main}.
\end{remark}

Observe that the two types of cells in the extended star form two ordered sequences as seen from $A$. In the cells above $L$, each cell shares with the next one an edge of the form $AP_i$. In the cells below $L$, each cell shares with the next one a single point (the last point of one negative interval, which coincides with the first point of the next).

For a configuration in convex position (say $\CB$) the cells in the extended star cover the whole convex hull. That is to say, they form a polyhedral subdivision of the configuration. For a general configuration this is not the case. Still:

\begin{lemma}
\label{lemma:extended_star}
Let $\CA$ be a configuration in general position and let $\sigma\in \{0,+1,-1\}^{[n-3]}$ be a signature. Then, the extended star $S$ induced by $\sigma$ in $\CA$ can be extended to a regular subdivision (that is, there is a regular subdivision of $\CA$ containing $S$).
\end{lemma}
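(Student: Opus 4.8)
The plan is to realize $S$ as a subcollection of the lower facets of a single lifted configuration, i.e.\ to produce one height vector $\omega\in\R^n$ whose induced regular subdivision has every cell of $S$ among its cells, the remaining ``shaded'' regions being subdivided in whatever way $\omega$ happens to dictate. I would build $\omega$ in two stages and glue them with Lemma~\ref{lemma:refinement}: a coarse stage fixing the part of $S$ above the link $L$ (the star of $A$), and a refinement stage carving out the negative intervals below $L$.

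For the star of $A$ I would pass to polar coordinates centred at $A$, writing each $P_i$ as $(\theta_i,r_i)$ with $\theta_0<\dots<\theta_{n-2}$ (the radial order) and all $\theta_i$ within an arc of length $<\pi$, since $A$ is a vertex of $\conv(\CA)$. Setting $\omega_A=0$ and $g_i:=\omega_i/r_i$, a lower plane through $(A,0)$ with normal data $c$ passes through $(P_i,\omega_i)$ exactly when $g_i=|c|\cos(\theta_i-\theta_c)$, and lies below the other lifted points exactly when $g_j\ge |c|\cos(\theta_j-\theta_c)$ for all $j$. Thus the whole star of $A$ is governed by the lower envelope of the planar points $(\theta_i,g_i)$ with respect to the cosine family $|c|\cos(\theta-\theta_c)$, which over an arc of length $<\pi$ behaves like a ``projectively linear'' (Chebyshev) system. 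This reduces the star to a one-dimensional envelope problem whose outcome I can prescribe by choosing the $g_i$: put the points that should form edges with $A$ (the $\sigma=+1$ link vertices) on the envelope; put each point of $\sigma^0$ exactly on the cosine arc of the star cell it belongs to, so that it becomes a non-vertex point of that lower facet; and send the unused positive points (those strictly above $L$) to very large $g_i$, so that they sit strictly above the entire lower hull. By Lemma~\ref{lemma:signatures2} these choices reproduce precisely the prescribed cells above $L$.

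It then remains to realize the negative intervals $\CP_1,\dots,\CP_l$ as cells without disturbing the star. Here I would use that consecutive non-negative link vertices cut the region below $L$ into disjoint angular sectors, one per negative interval, meeting only at the shared barrier points; so the heights of the negative points are still free (any assignment keeping their $(\theta_i,g_i)$ above the star envelope leaves the star untouched) and may be chosen independently sector by sector. Within each sector I would lift the points of $\CP_r$ onto a common plane lying below the remaining points of that sector and then refine, via Lemma~\ref{lemma:refinement}, so that $\CP_r$ appears as a single lower facet; the leftover shaded pieces are subdivided arbitrarily by $\omega$ and need no control. A clean alternative, closer to the induction in Lemma~\ref{lemma:main}, is to run the argument by induction on $n$, peeling off one negative interval (or one barrier) at a time and reassembling with Lemma~\ref{lemma:refinement}.

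The main obstacle is the coherence of the two stages into one genuinely convex lift. Concretely I must check that every crease is convex: the creases $AP_i$ between consecutive star cells (automatic once $A$ is the apex of the cosine envelope), and, more delicately, the edges of $L$ separating a star cell from a negative interval, across which the surface must bend upward while the negative cell sits below the rest of its sector. This is exactly where the definition of the signature enters: a negative interval is a maximal concave chain bounded by two non-negative barriers (Definition~\ref{defi:intervals}), so its convex hull bulges away from $A$ and, by general position, contains no other point of $\CA$ and meets the star only along the chord joining its two barriers. I expect verifying this convexity along $L$, together with the emptiness of each $\conv(\CP_r)$, to be the technical heart of the proof; everything else is bookkeeping enabled by the polar reduction and Lemma~\ref{lemma:refinement}.
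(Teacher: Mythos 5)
Your reduction of the star of $A$ to a one-dimensional lower-envelope problem in the coordinates $(\theta_i,g_i)$ is correct and is a perfectly reasonable way to prescribe the cells above $L$; it plays the role that the paper's step-by-step coplanar propagation plays for those cells. The gap is in the second stage. For $\CP_r$ to be a cell of the resulting regular subdivision, the plane carrying the lifted $\CP_r$ must lie strictly below \emph{every} other lifted point of $\CA$, not just below the points of its own angular sector (which in any case all belong to $\CP_r$ or to star cells); in particular the planes of the various $\CP_r$ must be mutually compatible, and each must bend convexly against the star cells along the edges of $L$. Your ``independently, sector by sector'' claim is precisely the assertion that all of these global inequalities can be satisfied simultaneously, and you explicitly defer it (``I expect verifying this convexity along $L$ \dots to be the technical heart of the proof''). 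That deferred step \emph{is} the content of the lemma: since the extended star does not cover $\conv(\CA)$, there is no a priori reason why locally consistent lifts of its cells glue into one convex lift. Note also that Lemma~\ref{lemma:refinement} cannot be used to carve out $\CP_r$ a posteriori: refinement only subdivides the cells of an already-constructed coarse regular subdivision, so you would first need a coarse regular subdivision with a cell whose point set admits $\CP_r$ as a cell of a regular subdivision of it --- which is the same difficulty again.

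The paper closes exactly this gap with a specific device: it adjoins an auxiliary apex $D$ so that $ABDC$ is a convex quadrilateral containing $\conv(\CA)$ (after a projective transformation if necessary) and replaces each $\CP_j$ by $\CP_j\cup\{D\}$. The extended star then becomes an honest subdivision of $\CA\cup\{D\}$ in which (for $\sigma$ without positive entries) every interior vertex of $L$ lies in exactly three cells; hence the coplanar lift is forced once the heights of $A$, $B$, $D$ and one further point are fixed, and local convexity at the single initial crease propagates along $L$ to all creases. Positive entries are restored afterwards by a perturbation supported on lines through $A$. Some such global propagation argument (or a direct verification of all the lower-hull inequalities) is what your write-up still needs. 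Two smaller caveats: the concave chain of a negative interval bulges \emph{toward} $A$ (each intermediate vertex lies inside the triangle formed by $A$ and its two neighbours on $L$), not away from it; and the fact that $\conv(\CP_r)$ contains no further points of $\CA$ follows from the definition of the negative interval (every point of $\CA$ below $L$ and radially between the two barriers belongs to $\CP_r$), not from general position.
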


\begin{proof}
As usual, let $B$ and $C$ be the vertices of $\conv(\CA)$ adjacent to $A$ and let $B=P_{0}$, \dots, $P_{n-2}=C$ be the points of $\CA \setminus A$ in the order they are seen from $A$. Let $B=P_{a_0},P_{a_1},\dots, P_{a_{s-1}}, P_{a_s}=C$ be the points along $L$.

%
%

For the time being we assume that $\sigma$ has no positive entry, that is, it uses only $0$ or $-1$. This has the effect that every internal vertex of the polyline $L$ belongs to exactly three cells of $S$, two above $L$ and one below if the point has negative signature, and one above and two below if it has zero signature. Moreover, for each three consecutive ones $P_{a_{i-1}}P_{a_i}P_{a_{i+1}}$ there is a unique cell $\CR_i \in S$ that contains the three of them. Observe that sometimes $\CR_i=\CR_{i+1}$ (e.g., the cell containing $P_6 P_9 P_{12} P_{13}$ in Figure~\ref{fig:extended_star}) and some cells of $S$ are not an $\CR_i$ for any $·i$ (e.g., the cells above $P_9P_{13}$ or above $P_{15}P_{19}$). 

We consider a second copy $A'$ of the point $A$. This point $A'$ is used as a tool to define the heights for the rest, but it is not  part of the configuration.
Give height $+1$ to $A$, $-1$ to $A'$, and arbitrary (e.g., $0$) to $B=P_{a_0}$ and to $P_{a_1}$. Once this is done, give every other point $P_{a_{i+1}}$, $i\in \{1,\dots,s-1\}$ along $L$  the height that lifts $P_{a_{i-1}}P_{a_i}P_{a_{i+1}}$ coplanar to $A$ if $\CR_i$ is above $L$ and coplanar to $A'$ if $\CR_i$ is below $L$. Finally, for each point $P$ not in $L$ consider an edge $e=P_{a_i}P_{a_{i+1}}$ along $L$ belonging to the unique cell of $S$ containing $P$. Lift $P$ to lie coplanar to $e$ and $A$ (resp. to $e$ and $A'$) if the cell  containing $P$ is above (resp. below) $L$.


\begin{figure}[htb]
\centering
%

\begin{tikzpicture}[scale=0.5]
\fill (8,3) circle(7pt) coordinate (A) node[anchor=270]{$A$}
      (0,0) circle(4pt) coordinate (B) node[anchor=0]{$B$}
      (19,2) circle(4pt) coordinate (C) node[anchor=180]{$C$}
      (0,-3) circle(4pt) coordinate (P1) node[anchor=45]{$P_{1}$}
      (6,0) circle(4pt) coordinate (P2) node[anchor=20]{$P_{2}$}
      (6,-2) circle(4pt) coordinate (P3) node[anchor=45]{$P_{3}$}
      (4,-8) circle(4pt) coordinate (P4) node[anchor=20]{$P_{4}$}
      (6,-7) circle(4pt) coordinate(P5) node[anchor=-20]{$P_{5}$}
      (7,-5) circle(4pt) coordinate (P6) node[anchor=-20]{$P_{6}$}
      (9,-3) circle(4pt) coordinate (P7) node[anchor=20]{$P_{7}$}
      (10,-6) circle(4pt) coordinate (P8) node[anchor=50]{$P_{8}$}
      (9,0) circle(4pt) coordinate (P9) node[anchor=10]{$P_{9}$}
      (12,-5) circle(4pt) coordinate (P10) node[anchor=20]{$P_{10}$}
      (16,-7) circle(4pt) coordinate (P11) node[anchor=100]{$P_{11}$}
      (10,1) circle(4pt) coordinate (P12) node[anchor=110]{$P_{12}$}
      (13,-1) circle(4pt) coordinate (P13) node[anchor=50]{$P_{13}$}
      (19,-4) circle(4pt) coordinate (P14) node[anchor=130]{$P_{14}$}
      (14,0) circle(4pt) coordinate (P15) node[anchor=130]{$P_{15}$}
      (18,-1) circle(4pt) coordinate (P16) node[anchor=120]{$P_{16}$}
      (20,-1) circle(4pt) coordinate (P17) node[anchor=180]{$P_{17}$}
      (12,1.5) circle(4pt) coordinate (P18) node[anchor=160]{$P_{18}$}
      (16,1) circle(4pt) coordinate (P19) node[anchor=120]{$P_{19}$};
\draw (A) -- (B) -- (P1) -- (P4) -- (P11) -- (P14) -- (P17) -- (C) -- (A);
\draw[very thick] (B) -- (P3) -- (P6) -- (P9) -- (P12) -- (P13) -- (P15) -- (P19) -- (C);
\draw (A) -- (P3)
      (A) -- (P9)
      (A) -- (P12)
      (A) -- (P15)
      (A) -- (P19)
      (P4) -- (P5) -- (P6) -- (P8) -- (P11) -- (P13) -- (P14);

\begin{scope}[on background layer]
        \filldraw [gray!40] (P4) -- (P5) -- (P6) -- (P8) -- (P11) -- (P4);
        \filldraw [gray!40] (P11) -- (P13) -- (P14) -- (P11);
\end{scope}

\end{tikzpicture}

\caption{An extended star (white region, subdivided into cells), for a $\sigma$ with no positive entries: zero at $2$, $6$, $13$ and $18$, and negative in the rest of the $P_i$}
\label{fig:extended_star}
\end{figure}


Observe that this lift is essentially unique. The only choices were the heights of the first four points $A$, $A'$, $B=P_0=P_{a_0}$ and $P_{a_1}$, which can be arbitrarily changed by adding an affine global function to the heights, as long as this function does not change the orientation of the lifted tetrahedron that these four points form.

By definition, these heights lift each cell of $S$ coplanar, and we only need to check that it lifts them as lower facets of the lifted configuration. For this, let us consider the following unbounded cell complex $S'$, depicted in Figure~\ref{fig:extended_star_rays}: forget all the edges of $S$ below $L$ and insert instead rays separating each two consecutive lower cells of $S$. (These additional rays can be alternatively described as the rays starting at zero points in $L$, including $B$ and $C$, and going in the direction opposite to $A$). Since lower cells are lifted coplanar to $A'$, each additional ray is lifted coplanar to the two lower cells incident to it, so we can think of the heights as a lift of $S'$, and want to check that it is a convex lift. This is equivalent to checking that every edge or ray of $S'$ is lifted convex.

For the edges lying in $L$ convexity follows from the fact that the cell above $L$ is coplanar to $A$ (in fact, it contains $A$) and the cell below $L$ is coplanar to $A'$, with $A'$ lifted below $A$. For the rest of edges/rays convexity follows from the fact that  every internal vertex of $S'$ has degree three: since one of the three edges at that edge (the one from $L$) is lifted convex, the other two are lifted convex too.

\begin{figure}[htb]
\centering
%

\begin{tikzpicture}[scale=0.5]
\fill (8,3) circle(7pt) coordinate (A) node[anchor=270]{$A$}
      (0,0) circle(4pt) coordinate (B) node[anchor=-30]{$B$}       (-8/3,-1) circle(0pt) coordinate (B') node[anchor=0]{}
      (19,2) circle(4pt) coordinate (C) node[anchor=210]{$C$}       (22.3,1.7) circle(0pt) coordinate (C') node[anchor=180]{}
      (0,-3) circle(4pt) coordinate (P1) node[anchor=45]{$P_{1}$}
      (6,0) circle(4pt) coordinate (P2) node[anchor=20]{$P_{2}$}
      (6,-2) circle(4pt) coordinate (P3) node[anchor=45]{$P_{3}$}
      (4,-8) circle(4pt) coordinate (P4) node[anchor=20]{$P_{4}$}
      (6,-7) circle(4pt) coordinate(P5) node[anchor=-20]{$P_{5}$}
      (7,-5) circle(4pt) coordinate (P6) node[anchor=-20]{$P_{6}$}       (6.5,-9) circle(0pt) coordinate (P6') node[anchor=-20]{}
      (9,-3) circle(4pt) coordinate (P7) node[anchor=20]{$P_{7}$}
      (10,-6) circle(4pt) coordinate (P8) node[anchor=50]{$P_{8}$}
      (9,0) circle(4pt) coordinate (P9) node[anchor=10]{$P_{9}$}
      (12,-5) circle(4pt) coordinate (P10) node[anchor=20]{$P_{10}$}
      (16,-7) circle(4pt) coordinate (P11) node[anchor=100]{$P_{11}$}
      (10,1) circle(4pt) coordinate (P12) node[anchor=110]{$P_{12}$}
      (13,-1) circle(4pt) coordinate (P13) node[anchor=50]{$P_{13}$}       (20.5,-7) circle(0pt) coordinate (P13') node[anchor=50]{}
      (19,-4) circle(4pt) coordinate (P14) node[anchor=130]{$P_{14}$}
      (14,0) circle(4pt) coordinate (P15) node[anchor=130]{$P_{15}$}
      (18,-1) circle(4pt) coordinate (P16) node[anchor=120]{$P_{16}$}
      (20,-1) circle(4pt) coordinate (P17) node[anchor=180]{$P_{17}$}
      (12,1.5) circle(4pt) coordinate (P18) node[anchor=160]{$P_{18}$}
      (16,1) circle(4pt) coordinate (P19) node[anchor=120]{$P_{19}$};
\draw (A) -- (B) ;
\draw (C) -- (A);
\draw[very thick] (B) -- (P3) -- (P6) -- (P9) -- (P12) -- (P13) -- (P15) -- (P19) -- (C);
\draw (A) -- (P3)
      (A) -- (P9)
      (A) -- (P12)
      (A) -- (P15)
      (A) -- (P19);
\draw[dashed] (P6) -- (P6');
\draw[dashed] (P13) -- (P13');
\draw[dashed] (B) -- (B');
\draw[dashed] (C) -- (C');


\end{tikzpicture}

\caption{Forgetting the edges below $L$ and inserting rays away from the  points of $L$ with zero signature, to show that our lift is regular.}
\label{fig:extended_star_rays}
\end{figure}


This finishes the proof of the Lemma under the assumption that $\sigma$ has no positive entries. The modification for positive entries is easy:
\begin{itemize}
    \item Positive points of $\sigma$ that are strictly above $L$ correspond to unused points in the extended star. Such points do not affect regularity, since we can give them any sufficiently big positive height.
    \item Positive points of $\sigma$ that lie in $L$ correspond to additional edges incident to $A$. In a first phase take those signs as if they were zero and construct the heights for a regular subdivision extending $S$ as explained above. In a second phase, iteratively perturb the chosen heights, in the sense of  Lemma~\ref{lemma:refinement}, as follows, processing the positive points along $L$ in an arbitrary order: For each such point $P$, let $r_P$ be the straight line containing $AP$ and 
    consider the lifting height $\omega_P$ that lifts each point of $\CA$ to height equal to its distance to $r_P$. All cells of $S$ except the positive one containing the segment $AP$ lie completely on one side of $r_P$ and, thus, they are lifted planarly. The cell containing  $AP$ crosses $r_P$, so that this one is the only cell of $S$ refined by the perturbation, and its refinement is precisely what we want: the edge $AP$ is introduced in the subdivision.
    \qedhere
\end{itemize}
\end{proof}

\subsection{Well-formed subdivisions}

We now want to take the dimension of a regular subdivision (as a face of the secondary polytope) into consideration. We first analyze the case of points in convex position.

Let $T$ be a subdivision of $\CB$. Let $\sigma\in \{0,+1,-1\}^{[n-3]}$ be its link signature. As mentioned above, the extended star $S$ of $\sigma$ in $\CB$ is a polyhedral subdivision and moreover:

\begin{itemize}
    \item $T$ refines $S$, and
    \item $T$ and $S$ coincide above the link $L$ of $A$.
\end{itemize}

Let $\ell$ be the number of cells in $S$ below $L$ and call $\CP_1,\dots, \CP_\ell$ those cells (observe that, as sets of labels, they are simply the negative intervals in $\sigma$). Let $m_i$ be the length of the $i$-th negative interval, so that  $|\CP_i|=m_i+2$. In $T$, each $\CP_i$ gets subdivided into a subdivision $T_i$. Since $\CP_i$ is a convex polygon, $T_i$ is a regular subdivision and it has a well-defined dimension $\delta_i$. 

\begin{definition}
We call $(\sigma, \delta)$ (with $\delta=(\delta_1,\dots,\delta_\ell)$) the \emph{extended signature} of the subdivision $T$ of $\CB$. 
We say it is a signature of length $n-3$ (the length of $\sigma$).
\end{definition}

The possible extended signatures corresponding to a signature $\sigma$ are easy to characterize. 
First, the lengths of the negative intervals in $\sigma$ give us the numbers $m_1,\dots,m_\ell$. Then, 
$\delta_i$ can be any number from $[0,1,\dots,m_i-1]$,  for each $i=1,\dots, \ell$.

\begin{lemma}
\label{lemma:ngon}
Let $(\sigma,\delta)$ be an extended signature of length $n-3$. Let $m_1,\dots,m_\ell$ be the list of lengths of the negative intervals in $\sigma$. Then, 
\begin{enumerate}
\item The number of subdivisions of the convex $n$-gon with that extended signature equals
\[
\prod_{i=1}^\ell C_{m_i}^{\delta_i},
\]
\item All such subdivisions correspond to faces of dimension $\sum_i \delta_i + |\sigma^0|$ in the secondary polytope.
\end{enumerate}
\end{lemma}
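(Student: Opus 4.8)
The plan is to prove both statements essentially simultaneously, exploiting the fact that for $\CB$ in convex position the extended star $S$ is an honest polyhedral subdivision that $T$ refines, and that $T$ and $S$ agree above the link $L$. The key structural observation, already set up in the text preceding the statement, is that a subdivision $T$ of $\CB$ with link signature $\sigma$ is determined by $(\sigma, \delta)$ together with the actual choice of subdivision $T_i$ in each negative-interval polygon $\CP_i$. First I would argue that, conversely, any independent choice of subdivisions $T_i$ of the polygons $\CP_i$ glues with the fixed star above $L$ to give a well-defined subdivision of $\CB$ with the prescribed extended signature. The gluing is unproblematic because the cells $\CP_i$ below $L$ meet each other and the cells above $L$ only along single shared vertices or edges of $\CB$ that are common faces, so properness of the global collection reduces to properness within each $\CP_i$. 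This yields a bijection between subdivisions of $\CB$ with extended signature $(\sigma,\delta)$ and tuples $(T_1,\dots,T_\ell)$ where each $T_i$ is a subdivision of the $(m_i+2)$-gon of dimension exactly $\delta_i$.

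For part (1), I would then simply count. By the associahedron example in the introduction, the number of subdivisions of an $(m_i+2)$-gon (i.e.\ of dimension-$(m_i-1)$ associahedron) that have dimension $\delta_i$ as faces of the secondary polytope equals $C_{m_i}^{\delta_i}$, the number of $\delta_i$-dimensional faces of the $(m_i-1)$-associahedron. Because the choices in different negative intervals are independent, the total count is the product $\prod_{i=1}^\ell C_{m_i}^{\delta_i}$, which is exactly the claimed formula.

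For part (2), I would compute the dimension of the face of $\Sigma(\CB)$ corresponding to $T$ directly from the refinement structure. The guiding principle, stated in the remark after Lemma~\ref{lemma:signatures2}, is that each zero entry of $\sigma$ contributes one unit to the dimension (a used point not joined to $A$ by an edge is a degree of freedom in the lift), and that subdividing each $\CP_i$ contributes $\delta_i$. The cleanest way to make this rigorous is via Lemma~\ref{lemma:refinement}: starting from a regular lifting $\alpha$ that realizes the extended star $S$, a small perturbation $\omega$ supported on the interiors of the $\CP_i$ produces exactly the refinement of $S$ in which each $\CP_i$ is subdivided according to $\omega|_{\CP_i}$, while leaving the cells above $L$ untouched. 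Since the secondary fan is the normal fan of $\Sigma(\CB)$, the dimension of the face of $T$ equals the dimension of the cone of liftings producing $T$, which decomposes as the dimension coming from the free heights of the $|\sigma^0|$ used points above $L$ plus the dimensions $\delta_i$ of the independent secondary cones of the polygons $\CP_i$. This gives $\dim = \sum_i \delta_i + |\sigma^0|$.

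The main obstacle I anticipate is the bookkeeping in part (2): one must be careful that the contributions of the zero entries and of the $\delta_i$ are genuinely independent directions in lifting space and that no double-counting or hidden linear dependence occurs. Concretely, I would need to verify that a lifting direction altering the height of a used point above $L$ (a zero entry of $\sigma$), and a lifting direction refining some $\CP_i$, span complementary subspaces of the secondary cone of $T$ modulo the lineality of the fan. Because the points above $L$ that carry zero signature are vertices of cells of the star that are otherwise rigidly determined, and the $\CP_i$ are disjoint convex polygons meeting only at shared boundary vertices, I expect this independence to follow from the product structure of the secondary fan under the decomposition of $\conv(\CB)$ into the star cells and the polygons $\CP_i$; but making the dimension count airtight, rather than merely heuristic via ``each zero adds one,'' is the step that will require the most care.
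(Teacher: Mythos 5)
Your part (1) follows the same decomposition as the paper: since $\CB$ is in convex position the extended star $S$ is an honest polyhedral subdivision, $T$ refines it and agrees with it above $L$, so subdivisions with extended signature $(\sigma,\delta)$ biject with independent choices of a $\delta_i$-dimensional subdivision of each convex $(m_i+2)$-gon $\CP_i$, giving the product $\prod_{i=1}^\ell C_{m_i}^{\delta_i}$. That part is fine.

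In part (2) your route through lifting vectors can be made to work, but as written it contains a duality error (in fact two, which happen to cancel). The dimension of the face of $\Sigma(\CB)$ corresponding to $T$ is \emph{not} the dimension of the cone of liftings producing $T$; since the secondary fan is the \emph{normal} fan of $\Sigma(\CB)$, it is $n$ minus that dimension, so large faces correspond to small cones. Likewise, a used point with zero signature is not a ``free height'' in the lift: its height is pinned to the plane of the lifted cell containing it, so each such point is a \emph{constraint} that lowers the cone dimension by one and thereby raises the face dimension by one (the genuinely free heights are those of the unused points, i.e.\ the positive entries strictly above $L$). Similarly, the secondary cone of $\CP_i$ selecting a $\delta_i$-dimensional subdivision has dimension $|\CP_i|-\delta_i$, not $\delta_i$. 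With both corrections the cone has dimension $n-|\sigma^0|-\sum_i\delta_i$ and the face has dimension $|\sigma^0|+\sum_i\delta_i$, as claimed; this is essentially the computation the paper performs later, in the proof of Lemma~\ref{lemma:extension}, where it is genuinely needed. For the present lemma the paper takes a much shorter route that you could adopt: in the $n$-gon the dimension of a face of the associahedron is $n-3$ minus the number of diagonals used, which makes it additive over the cells of the subdivision; the cells above $L$ contribute $|\sigma^0|$ in total (no choice, trivial subdivision of each star cell) and the cells below contribute $\sum_i\delta_i$. That disposes of the independence worry you raise at the end without any cone bookkeeping.
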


\begin{proof}
The extended star covers the whole $\conv(\CB)$, and all subdivisions of $\conv(\CB)$ are regular. Moreover, the dimension of a subdivision $T$ in the associahedron equals the sum of dimensions of the individual cells.

Thus, both parts of the statement follow from counting in how many ways we can independently subdivide the cells in the extended star. For the cells above $L$ we have no choice (since $(\sigma,\delta)$ fixes the star of $A$), and their combined dimension equals $|\sigma_0|$. The $i$-th cell below is a convex $(m_i+2)$-gon, which has exactly $C_{m_i}^{\delta_i}$ subdivisions of dimension $\delta_i$. 
\end{proof}

For later use we  mention the following inequality among face numbers of associahedra, which follow from partitioning the $(m_1+\ldots+m_\ell+2)$-gon into $m$ convex polygons of sizes $m_1+2, \dots, m_\ell+2$ and then subdividing these independently:
\begin{lemma}
\label{lemma:catalan2}
For nonnegative integers $m_1,\ldots,m_\ell$ and $d$, we have 
\[
C^d_{m_1+\dots+m_\ell} \geq \sum_{d_1+\dots+d_\ell=d} C_{m_1}^{d_1}\cdots C_{m_\ell}^{d_\ell}
\]
\end{lemma}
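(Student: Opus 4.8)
The plan is to interpret both sides of the inequality as counts of subdivisions of convex polygons and to exhibit an injection from the objects counted on the right into those counted on the left. Throughout I use that $C_m^d$ equals the number of subdivisions of the convex $(m+2)$-gon whose corresponding face in the secondary polytope has dimension $d$; this is the definition of $C_m^d$ (the number of $d$-faces of the $(m-1)$-associahedron, i.e. of the secondary polytope of the $(m+2)$-gon) and is exactly the count appearing in Lemma~\ref{lemma:ngon} for a single negative interval.

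Writing $M := m_1 + \dots + m_\ell$, the left-hand side $C_M^d$ counts the dimension-$d$ subdivisions of a fixed convex $(M+2)$-gon $\CB$, while each summand $C_{m_1}^{d_1}\cdots C_{m_\ell}^{d_\ell}$ on the right counts the tuples $(T_1,\dots,T_\ell)$ in which $T_i$ is a dimension-$d_i$ subdivision of a convex $(m_i+2)$-gon; summing over $d_1+\dots+d_\ell=d$ counts all such tuples of total dimension $d$. I would then fix a coarse subdivision $S_0$ of $\CB$ into $\ell$ consecutive convex sub-polygons $\CP_1,\dots,\CP_\ell$ of sizes $m_1+2,\dots,m_\ell+2$, obtained by drawing $\ell-1$ pairwise non-crossing diagonals so that consecutive pieces share exactly one edge and the total vertex count is $M+2$. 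Given a tuple $(T_1,\dots,T_\ell)$ with $\dim T_i=d_i$ and $\sum_i d_i=d$, I assemble the subdivision $T:=T_1\cup\dots\cup T_\ell$ of $\CB$ that refines $S_0$ by placing $T_i$ inside $\CP_i$. Since each $T_i$ is recovered as the restriction of $T$ to $\CP_i$, this assignment is injective; it therefore suffices to prove that every such $T$ is a dimension-$d$ subdivision of $\CB$, and the inequality follows because the image is a subset of all dimension-$d$ subdivisions of $\CB$.

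The \emph{crux} is the additivity of dimension, $\dim T=\sum_i\dim T_i=d$. I would deduce this from the product structure of the secondary polytope over the face corresponding to $S_0$: the face of $\Sigma(\CB)$ indexed by the coarse subdivision $S_0$ is combinatorially the product of the secondary polytopes (associahedra) of its cells $\CP_1,\dots,\CP_\ell$, so that a refinement of $S_0$ obtained by subdividing the cells independently corresponds to a face whose dimension is the sum of the dimensions of the factor faces. This is precisely the additivity principle already used in the proof of Lemma~\ref{lemma:ngon} (``the dimension of a subdivision in the associahedron equals the sum of dimensions of the individual cells''), here applied to $S_0$ in place of an extended star; since $\CB$ is in convex position, all subdivisions involved are automatically regular, so no regularity hypothesis needs checking. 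I expect this additivity to be the only substantive point, the remainder being bookkeeping about how the $\ell-1$ diagonals cut the polygon into the pieces $\CP_i$.
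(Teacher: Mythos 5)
Your proposal is correct and is essentially the paper's own argument: the paper justifies Lemma~\ref{lemma:catalan2} in one sentence by partitioning the $(m_1+\dots+m_\ell+2)$-gon into $\ell$ convex polygons of sizes $m_1+2,\dots,m_\ell+2$ and subdividing these independently, which is exactly your injection. Your write-up merely makes explicit the two points the paper leaves implicit, namely injectivity of the gluing map and the additivity of dimension over the cells of the coarse subdivision, both of which you handle correctly.
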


We now want to extend this formalism of extended signatures (that is, the vector $\delta$ of dimensions associated to the negative intervals in $\sigma$) to \emph{some} regular subdivisions of an arbitrary point configuration $\CA$, that we call well-formed. 
The fact that we do not extend it to \emph{all} subdivisions is not a loss of generality, since what we show is that the well-formed subdivisions are sufficiently many to prove Theorem~\ref{thm:main2}.

\begin{definition}
Let $T$ be a regular subdivision of $\CA$. Let $\sigma$ be its signature and $S$ be the corresponding extended star, with $\CP_1,\dots,\CP_k$ the cells of $S$ below the polyline $L_\sigma$. Remember that $T$ and $S$ coincide above $L_\sigma$.
We say that $T$ is well-formed if it satisfies the following conditions:
\begin{enumerate}
    \item $T$ refines the extended star $S$ below the polyline; that is, $T$ contains a polyhedral subdivision $T_i$ of each cell $\CP_i$.
    \item The dimension of $T$ (as a regular subdivision of $\CA$) equals $|\sigma^0| + \sum_i \delta_i$, where $\delta_i$ is the dimension of $T_i$.
\end{enumerate}
In these conditions, we call $(\sigma, \delta)$ the \emph{extended signature} of $T$.
\end{definition}

\begin{remark}
For a given $\sigma$, the possible dimension vectors $(\delta_1,\dots, \delta_k)$ for the extended signatures are those  with $\delta_i < n_i$, where $(n_1,\dots,n_k)$ is the sequence of lengths of negative intervals of $\sigma$ in $\CA$. We emphasize that the latter depend not only on $\sigma$, but also on the configuration $\CA$, as already happened for triangulations. 

More precisely, if $(n_1,\dots,n_k)$ and 
$(m_1,\dots,m_\ell)$ are the sequences of lengths of  negative intervals of $\sigma$ in $\CA$ and $\CB$ respectively, we have that both sequences are ordered partitions of $|\sigma^-|$ and the latter refines the former.
\end{remark}

The key statement that we want to prove to derive Theorem~\ref{thm:main2} is:

\begin{lemma}
\label{lemma:extension}
Let $\sigma\in \{0,+1,-1\}^{[n-3]}$ be an extended signature for $\CA$, let $S$ be the extended star induced by it, let $\CP_1,\dots, \CP_k$ be the cells of $S$ below  $L$, and let $\delta= (\delta_1,\dots,\delta_k)$ be a valid vector of dimensions (that is, $\delta_i < n_i$ for every $i$, where $n_i = |\CP_i|-2$).

For each $\CP_i$, let $T_i$ be a regular subdivision of $\CP_i$ of dimension $\delta_i$. Then, there is a well-formed regular subdivision $T$ of $\CA$ with signature $(\sigma,\delta)$ and such that $T$ restricted to $\CP_i$ equals $T_i$.
\end{lemma}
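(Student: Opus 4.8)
The plan is to prove Lemma~\ref{lemma:extension} by combining the regularity of the extended star (Lemma~\ref{lemma:extended_star}) with the local refinement mechanism of Lemma~\ref{lemma:refinement}, taking care that the dimension count comes out exactly as $|\sigma^0|+\sum_i\delta_i$ rather than merely being bounded below. The starting point is the observation, already made in the excerpt, that a regular subdivision with signature $\sigma$ has dimension \emph{at least} $|\sigma^0|$ in the secondary polytope, and that refining each cell $\CP_i$ by a subdivision of dimension $\delta_i$ should contribute an additional $\sum_i\delta_i$ to the dimension; the whole content of the lemma is that these contributions are genuinely independent and additive, which is exactly what ``well-formed'' is designed to record.

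First I would invoke Lemma~\ref{lemma:extended_star} to obtain a lifting vector $\alpha\in\R^n$ whose regular subdivision $S_\alpha$ contains the extended star $S$; in particular $S_\alpha$ contains every cell $\CP_i$ below $L$ and the cells $\CR_j$ above $L$. Next, exactly as in the proof of Lemma~\ref{lemma:main}, for each $\CP_i$ I would pick a lifting $\omega^{(i)}$ on the points of $\CP_i$ that induces the prescribed regular subdivision $T_i$, normalized so that $\omega^{(i)}$ vanishes on the two endpoints of the negative interval (the only points $\CP_i$ shares with its neighbors). These patch together into a single global vector $\omega\in\R^n$ supported on the interiors of the negative intervals, which restricts to $\omega^{(i)}$ on each $\CP_i$. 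By Lemma~\ref{lemma:refinement}, for sufficiently small $\epsilon>0$ the regular subdivision $T$ induced by $\alpha+\epsilon\omega$ refines $S_\alpha$, subdividing each cell $\CC$ of $S_\alpha$ according to $\omega|_\CC$. Since $\omega$ is zero above $L$ (the $\CR_j$ and the unused/positive points are untouched) and equals $\omega^{(i)}$ on each $\CP_i$, the resulting $T$ coincides with $S$ above $L$ and restricts to $T_i$ on each $\CP_i$. This already gives a regular subdivision with the correct signature $\sigma$ and the correct restrictions, so condition (1) of well-formedness holds.

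The main obstacle is condition (2): proving that $\dim T$ equals \emph{exactly} $|\sigma^0|+\sum_i\delta_i$, not just at least this. The dimension of a regular subdivision $T$ equals the dimension of the space of lifting vectors inducing it (the corresponding secondary-fan cone), equivalently the number of independent ``wiggle'' directions. The inequality $\dim T\ge |\sigma^0|+\sum_i\delta_i$ is the easy direction: the $|\sigma^0|$ directions come from raising each used-but-not-edge point above $L$ independently, and each $\delta_i$ directions come from the deformation cone of $T_i$ inside $\CP_i$, and all of these deformations keep $T$ fixed combinatorially because of the shelling/degree-three structure established in Lemma~\ref{lemma:extended_star}. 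For the reverse inequality I would argue that any infinitesimal deformation of the lifting preserving $T$ must preserve the coarser subdivision $S$ (since $T$ refines $S$ and $S$ is the star determined by $\sigma$, which is locally rigid above $L$ once we fix the combinatorics there), and hence decomposes as a deformation fixing the cells $\CR_j$ and unused points, plus independent deformations of each $\CP_i$ preserving $T_i$; the former space has dimension exactly $|\sigma^0|$ and the latter exactly $\delta_i$ by definition of $\dim T_i$. The crux is thus to show the deformation cone of $T$ splits as the direct sum of the deformation cone of the star (dimension $|\sigma^0|$) and the deformation cones of the $T_i$ (dimensions $\delta_i$), with no ``cross terms'' — this is precisely where general position and the convex-quadrilateral normalization from Lemma~\ref{lemma:extended_star} guarantee that the cells below $L$ meet each other and the cells above $L$ only along shared vertices/edges, so their liftings can be perturbed independently. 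Once this direct-sum decomposition of the deformation space is established, both the dimension formula and the well-formedness of $T$ follow immediately, completing the proof.
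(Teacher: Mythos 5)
Your skeleton is the same as the paper's (build a regular subdivision containing the extended star via Lemma~\ref{lemma:extended_star}, refine it via Lemma~\ref{lemma:refinement}, then control the dimension), but there is a genuine gap in the construction step. You normalize each $\omega^{(i)}$ to vanish at the two endpoints of the negative interval and claim the resulting global $\omega$ is ``zero above $L$,'' so the cells $\CR_j$ are untouched. This is exactly the argument from the triangulation case (Lemma~\ref{lemma:main}), and it works there because every cell above $L$ is a triangle. In general it fails: a negative vertex of the link $L$ (a concave vertex, where $L$ bends towards $A$) is an \emph{interior} point of a negative interval $\CP_i$ --- so $\omega^{(i)}$ is generically nonzero there --- and at the same time it belongs to cells of $S$ above $L$ (the paper notes that such a point lies in two cells above $L$ and one below). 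If such a cell $\CR_j$ has more than three points (e.g.\ $\CR_j=\{A,P,Q,P'\}$ with $P$ negative, $Q$ a zero point, $P'$ positive, so that $\omega$ vanishes at $A,Q,P'$ but not at $P$), then $\omega|_{\CR_j}$ is not affine and Lemma~\ref{lemma:refinement} forces a nontrivial refinement of $\CR_j$, destroying the prescribed star of $A$ and hence the signature. The paper avoids this by \emph{not} patching: it processes all cells of the extended star in their shelling order and, on each cell above $L$, extends $\omega$ \emph{affinely} from the at most three points already assigned, so that those cells are lifted flat by construction even though $\omega$ is nonzero on some of their points.

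The second weak point is the dimension count. Your claimed direct-sum decomposition of the deformation cone is asserted rather than proved, and it omits two things the paper has to deal with explicitly. First, the extended star does not cover $\conv(\CA)$: the subdivision $T$ is not determined by $S$ and the $T_i$, and the uncovered regions contribute extra dimension unless $\omega$ is chosen generically enough that they get fully triangulated --- your ``deformation space of the star has dimension exactly $|\sigma^0|$'' simply ignores these regions. Second, the dimension of $T$ as a face of $\Sigma(\CA)$ is $n$ minus the dimension of its secondary cone, which includes the $3$-dimensional lineality space of affine functions; a clean splitting into ``$|\sigma^0|$ plus $\sum_i\delta_i$'' has to account for this, which is why the paper replaces the direct-sum heuristic by an explicit step-by-step parameter count (an initial $2$, contribution $0$ from each cell above $L$, and $n_i-\delta_i$ or $n_i-\delta_i+1$ from each $\CP_i$ according to whether its first point is positive or zero), which sums to $n-(|\sigma^0|+\sum_i\delta_i)$ only after the $|\sigma^+|$ terms cancel. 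Your lower bound $\dim T\ge|\sigma^0|+\sum_i\delta_i$ is fine, but the upper bound is where the actual work lies and your proposal does not supply it.
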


We postpone the proof of Lemma~\ref{lemma:extension} and first show how to derive Theorem~\ref{thm:main2} from it.

\begin{corollary}
\label{coro:extension}
With the same notation as in Lemma~\ref{lemma:extension}, the number of well-formed subdivisions of $\CA$ with extended signature $(\sigma, \delta)$ is at least
\[
\prod_{i=1}^k C_{n_i}^{\delta_i}.
\]
\end{corollary}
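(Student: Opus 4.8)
The plan is to prove Corollary~\ref{coro:extension} as an essentially immediate consequence of Lemma~\ref{lemma:extension} combined with the regular refinement mechanism of Lemma~\ref{lemma:refinement}, together with the inductive count supplied by Theorem~\ref{thm:main2} applied to each cell below the link. First I would fix the extended signature $(\sigma,\delta)$ and the induced extended star $S$, with cells $\CP_1,\dots,\CP_k$ below $L$. For each $\CP_i$, which is a point configuration in general position with $|\CP_i| = n_i+2$ points, I would invoke the inductive hypothesis of Theorem~\ref{thm:main2}: the secondary polytope $\Sigma(\CP_i)$ has at least $C_{n_i}^{\delta_i}$ faces of dimension $\delta_i$, i.e. $\CP_i$ has at least $C_{n_i}^{\delta_i}$ regular subdivisions of dimension $\delta_i$. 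The key point is that these local counts combine independently to give the product bound.

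The main step is then to pass from independent local choices to globally distinct well-formed subdivisions of $\CA$. For each tuple $(T_1,\dots,T_k)$, where $T_i$ is a regular subdivision of $\CP_i$ of dimension $\delta_i$, Lemma~\ref{lemma:extension} produces a well-formed regular subdivision $T$ of $\CA$ with signature $(\sigma,\delta)$ whose restriction to each $\CP_i$ equals $T_i$. I would emphasize that this assignment $(T_1,\dots,T_k)\mapsto T$ is injective: since $T$ restricted to $\CP_i$ recovers $T_i$ by construction, two distinct tuples yield two distinct subdivisions $T$. Hence the number of well-formed subdivisions of $\CA$ with extended signature $(\sigma,\delta)$ is at least the number of tuples, namely $\prod_{i=1}^k C_{n_i}^{\delta_i}$.

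The part requiring a little care, rather than a genuine obstacle, is the injectivity claim: I must make sure that the restriction operation is well-defined and genuinely inverts the construction. This follows because well-formedness condition (1) guarantees that $T$ contains a polyhedral subdivision $T_i$ of each $\CP_i$, so ``$T$ restricted to $\CP_i$'' is unambiguous, and condition (2) together with Lemma~\ref{lemma:extension} pins down that this restriction is exactly the prescribed $T_i$ of dimension $\delta_i$. The only subtlety I would flag explicitly is that we are counting regular subdivisions of the subconfigurations $\CP_i$, which is precisely what the inductive form of Theorem~\ref{thm:main2} supplies, so the induction is on the number of points and each $\CP_i$ has strictly fewer points than $\CA$ (as $\sigma$ has at least the two boundary-forced non-negative endpoints outside any single negative interval). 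Assembling these observations gives the stated lower bound, and no Catalan-type inequality is needed at this stage since the product form is exactly what Lemma~\ref{lemma:extension} delivers; the inequalities of Lemma~\ref{lemma:catalan2} enter only afterward, when summing over $\delta$ to compare with $\CB$.
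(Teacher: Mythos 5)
Your proposal is correct and follows essentially the same route as the paper: the authors likewise prove the corollary by combining Lemma~\ref{lemma:extension} (which turns each tuple of regular subdivisions $T_i$ of dimension $\delta_i$ of the cells $\CP_i$ into a distinct well-formed subdivision of $\CA$ with extended signature $(\sigma,\delta)$) with the inductive hypothesis of Theorem~\ref{thm:main2} applied to each smaller configuration $\CP_i$, which supplies the factor $C_{n_i}^{\delta_i}$. Your explicit injectivity remark and the observation that Lemma~\ref{lemma:catalan2} only enters later are both accurate refinements of what the paper leaves implicit.
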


\begin{proof}[Proof of Corollary~\ref{coro:extension} and Theorem~\ref{thm:main2}]
The proof is by induction. We first prove Corollary~\ref{coro:extension} for configurations of size $n$ assuming Theorem~\ref{thm:main2} for smaller configurations, and then derive from it 
Theorem~\ref{thm:main2} for size $n$. 

The first part is easy. Lemma~\ref{lemma:extension} says that there are at least as many well-formed subdivisions of $\CA$ with extended signature $(\sigma, \delta)$ as there are choices of regular polyhedral subdivisions of dimensions $\delta_i$ for the cells $\CP_i$ of $S$ below $L$. The inductive hypothesis says that the latter are at least $C_{n_i}^{\delta_i}$ for each $i$.

To derive Theorem~\ref{thm:main2}, remember that there is a well-defined map that sends each  extended signature $(\sigma,\delta')$ in $\CB$ to an extended signature $(\sigma,\delta)$ in $\CA$, obtained by grouping together in $\delta$ the entries of $\delta'$ that correspond to the same negative interval of $\sigma$ in $\CA$. To emphasize this map let us slightly change the notation used so far. Let $(n_1,\dots,n_k)$ be the length-sequence of negative intervals of $\sigma$ in $\CA$, and denote 
$(m_1^1,\dots,m_1^{\ell_1}, \dots, m_k^1,\dots, m_k^{\ell_k})$ the sequence in $\CB$, so that
\[
n_i = \sum_{j=1}^{\ell_i} m_i^j
\qquad\text{for each $i=1,\dots,k$.}
\]
Similarly, we use the notation $(\gamma_1^1,\dots,\gamma_1^{\ell_1}, \dots, \gamma_k^1,\dots$, $\gamma_k^{\ell_k})$ for a dimension sequence in $\CB$ compatible with $(m_1^1,\dots,m_1^{\ell_1}, \dots, m_k^1,\dots$, $m_k^{\ell_k})$.

Then, by Lemma~\ref{lemma:ngon} the number of subdivisions of $\CB$ whose  extended signature maps to a fixed extended signature $(\sigma,\delta)$ of $\CA$ equals
\[
\prod_{i=1}^k \left(\sum_{\sum_{j=1}^{\ell_i}\gamma_i^j=\delta_i } \left(\prod_{j=1}^{\ell_i} C_{m_i^j}^{\gamma_i^j}
\right)\right),
\]
where the sum in the middle is over all non-negative tuples $(\gamma_i^1,\dots,\gamma_i^{\ell_i})$ adding up to $\delta_i$.

By Lemma~\ref{lemma:catalan2} this is less or equal than 
\[
\prod_{i=1}^k C_{\sum_j m_i^j}^{\delta_i}=
\prod_{i=1}^k C_{n_i}^{\delta_i}
\]
and, by Corollary~\ref{coro:extension}, this is 
less or equal than the number of regular subdivisions of $\CA$ with signature $(\sigma, \delta)$.
\end{proof}

To complete our job we need to prove Lemma~\ref{lemma:extension}.

\begin{proof}[Proof of Lemma~\ref{lemma:extension}]
We assume that $S$ uses all the points in $\CA$. This is no loss of generality because unused points do not affect regularity and each of them adds one both to the count $|\sigma^0|+\sum_i \delta_i$ and to the dimension of the corresponding regular subdivision of $\CA$.

Let $T_0$ be a regular subdivision that extends $S$, which exists by Lemma~\ref{lemma:extended_star}, and let $\alpha=(\alpha_1,\dots,\alpha_n)$ be its corresponding height vector. By Lemma~\ref{lemma:refinement}, 
for any choice of a perturbing height vector $\omega=(\omega_1,\dots,\omega_n)$ the subdivision obtained refining $T_0$ as prescribed by $\omega$ (restricted to each individual cell) is regular. 

Thus, we need to show that there exists an $\omega$ that restricted to each $\CP_i$ produces the regular subdivision $T_i$. This follows from the same arguments as in Lemma~\ref{lemma:extended_star}: as we process the cells in the extended star in their monotone order and assign heights $\omega_i$ to the points, each cell has at most three points in common with the previously processed ones, so that \emph{any} regular subdivision of the cell can be realized by $\omega$.

The only thing that needs to be shown is that the regular subdivision $T$ so obtained has exactly dimension $|\sigma^0|+\sum_i \delta_i$ in the secondary polytope. Observe that actually $T$ is not uniquely defined. Since $S$ does not cover $\conv(\A)$, different choices of $\omega$ may produce different subdivisions in those uncovered regions. Our claim is that as long as $\omega$ is chosen sufficiently generic (among the possible $\omega$'s with $T|_{\CP_i} = T_i$), we get that $T$ has the desired dimension. 

More precisely: let $\{\CR_1,\dots,\CR_l\}$ be the cells of $S$ above $L$, in order. As we process each cell $\CC$ (be it a $\CP_i$ or a $\CR_i$) in the order specified in Lemma~\ref{lemma:extended_star}
there is a certain set $\omega_J$ ($J\subset[n]$) of coordinates that we are going to fix in this step, and our constraint is that $\omega_J$ needs to lie in the corresponding secondary cone of the configuration $\CC$. The dimension of this cone is $|\CC|-\delta-k$, where $\delta$ is the dimension, in the secondary polytope of $\CC$, of the subdivision that we want (which is the trivial subdivision for the cells above $L$, and the subdivision $T_i$ for the cells below) and $k$ is the number of points of $\CC$ whose $\omega$ was already fixed by the previous cells. Let us count these parameters in each case:
\begin{enumerate}
    \item There is an initial cone (in fact a linear plane) of dimension two corresponding to the fact that we can choose the heights of $A$ and $B$ arbitrarily, before processing any cell.
    \item If $\CC=\CR_i$ is a cell above $L$, then we want the trivial subdivision in it, whose dimension in the secondary polytope is $|\CR_i|-3$. The number of points of $\CR_i$ that were already processed was exactly $3$, so that the dimension of the cone we are looking at is zero. This simply reflects the fact that we have no choice for the lift of $\CR_i$, as happened in Lemma~\ref{lemma:extended_star}.
    
    \item If $\CC=\CP_i$ is a cell below $L$, then the dimension of the subdivision $T_i$ equals $\delta_i$, $|\CP_i| = n_i+2$, and $k$ equals $1$ or $2$. More precisely, we have $k=1$ if and only if the initial point in $\CP_i$ forms an edge with $A$; that is, if and only if it has positive signature. Thus, the dimension of the cone equals $n_i-\delta_i +1$ if $\CP_i$ starts with a positive point and it equals $n_i-\delta_i$ if $\CP_i$ starts with a zero point. In this count we need to consider $B$ as an extra positive point, since $\CP_1$ has only one point in common (the point $B$) with the part that is processed before it (the points $A$ and $B$).
 \end{enumerate}
    This implies that the global contribution of all cells comes from the two initial points and the cells below $L$, and it equals
    \begin{align*}
    & 2 + \sum n_i -\sum \delta_i + |\sigma^+| +1\\=&
    2+ |\sigma^-| -\sum \delta_i + |\sigma^+| +1\\=&
    2+ (n-3 - |\sigma^0|) - \sum \delta_i +1\\=&
    n - (|\sigma^0| + \sum \delta_i).
    \end{align*}
    That is, if a $\omega$ is sufficiently generic among the ones that refine $S$ in the way we want, then the secondary cone of the subdivision so obtained has dimension $n - (|\sigma^0| + \sum \delta_i)$. (Observe here that $\omega$ being sufficiently generic implies it to be in the relative interior of the secondary cone of $T$).
    Since
    the dimension of a regular subdivision of $\CA$ equals $n$ minus the dimension of the corresponding secondary cone. This finishes the proof.
\end{proof}

\section{Some remarks in higher dimension}
\label{sec:higherdim}

For a given dimension $d$ and number of points $n$, what is the $d$-dimensional configuration of size $n$ minimizing the number of triangulations?

Although this question is probably too difficult to be answered explicitly for every $d$ and $n$, we here include several remarks regarding it.

\subsubsection*{Regular versus non-regular triangulations}

In two dimensions, the configuration minimizing the set of all triangulations is the same as the one minimizing the number of regular ones and, in fact, it is a configuration that has \emph{only} regular triangulations.

Moreover, the numbers of regular and non-regular triangulations in the plane for a particular configuration (or for arbitrary configurations) are not that different. They both have upper and lower bunds of the type $k^n$, for constants $k>1$.

In higher dimension several things change drastically:

\begin{enumerate}
    \item For every $d\ge 3$ there is a constant $N$ such that \emph{every} configuration of size $N$ or more in general position in $\R^d$ has non-regular triangulations. This follows from the combination of two facts: the cyclic polytope $C_d(n)$ has non-regular triangulations for every $n\ge \max(d+6,9)$~\cite[Theorem 4.1]{bauescyclic}, and there is a constant $N=N(n,d)$ such that every configuration of more than $N$ points in general position in $\R^d$ contains a subconfiguration isomorphic to the vertex set of $C_d(n)$ (this is called the ``higher-dimensional Erd\H{o}s-Szekeres Theorem'' in~\cite[Proposition 9.4.7]{OMbook}).
    
    \item The number of regular triangulations of any configuration is bounded above by $2^{\Theta(n\log n)}$~\cite[Theorem 8.4.2]{triangbook}, while the number of non-regular ones can be much higher; for $C_d(n)$ it is bounded below by $2^{\Omega(n^{\lfloor d/2\rfloor})}$ \cite[Theorem 6.1.22 and Theorem 8.4.3]{triangbook}.
\end{enumerate}

\subsubsection*{Cyclic polytopes}
The natural candidate generalizing the convex $n$-gon to dimension $d$ is the vertex set of a cyclic $d$-polytope with $n$ vertices, mentioned above. Recall that the cyclic polytope $C_d(n)$ is defined as the convex hull of $n$ arbitrary points along the $d$-dimensional moment curve. The number of triangulations of it is independent of the points chosen (since the oriented matroid is fixed) but the number of regular triangulations is not. See, for example, \cite{AzaSan}, where these numbers are computed quite explicitly for the case $n=d+4$:
\begin{itemize}
    \item The total number of triangulations of $C_{n-4}(n)$ is in $\Theta(n2^n)$~\cite[Theorem 1]{AzaSan}.
    \item The number of regular ones is in the order of $\frac{n^4}{64} \pm\Theta(n^3)$, with the cubic term depending on the specific realization~\cite[Theorem 4.3 and Remark 4.4]{AzaSan}.
\end{itemize}

However, cyclic polytopes are  typically used as examples of polytopes with \emph{many} triangulations or subdivisions; as we already noted, they have $2^{\Omega(n^{\lfloor d/2\rfloor})}$ of them. This, looked from the distance, does not seem too far from the upper bound of $2^{O(n^{\lceil d/2\rceil}\log n)}$ that we know for the number of triangulations of \emph{any} point configuration~\cite[Theorem 8.4.2.1]{triangbook}.

 Thus, it would be extremely surprising if cyclic polytopes turn out to {minimize} the number of triangulations, as the $n$-gon does in the plane. They might, however, minimize the number of \emph{regular ones}, as we now see in a particular case.

\subsubsection*{The case $n=d+4$}

It is easy to show that every configuration of $n=d+2$ points in general position has exactly two triangulations and with $n=d+3$ it has $n$ of them, all regular. The secondary polytopes are, respectively, a segment and an $n$-gon~\cite[Section 5.5.1]{triangbook}.

The next case, configurations of size $n=d+4$, is more complicated but still tractable via Gale transforms~\cite[Sect. 6.4]{Ziegler}. 

For the purposes of this discussion, the Gale transform of a configuration $\CA$ of $n$ points in dimension $d$ is a configuration $\CA^*$ of $n$ points in the sphere of dimension $n-d-2$; that is, in the $2$-sphere for $n=d+4$. For example, the Gale transform of the cyclic polytope $C_{n-4}(n)$ can be realized by placing $\lceil n/2\rceil$ points in a small circle around the north pole and the other $\lfloor n/2\rfloor$ in a small circle around the south pole, in a regular manner~(see, e.g., \cite[Ex. 6.13]{Ziegler} and \cite[p. 262]{triangbook}).

In the case $n=d+4$ there is an easy recipe to compute or count regular triangulations of $\CA$ from its Gale transform $\CA^*$: draw the $\binom{n}2$ (shorter) geodesic arcs joining the $n$ points of $\CA^*$ in the sphere $S^2$, and the regular triangulations of $\CA$ turn out to be in bijection with the 2-dimensional regions cut by these geodesics. This is an instance of~\cite[Corollary 5.4.9]{triangbook}; the cell decomposition of the 2-sphere produced by the $\binom{n}2$ arcs is the \emph{chamber complex} of $\CA^*$.

Let us make the simplifying assumption that the $\binom{n}2$ geodesics do not produce triple crossings. This is a genericity condition that can always be attained via a small perturbation of the points in $\CA^*$ (or, equivalently, of the points in $\CA$, since $\CA$ and $\CA^*$ depend continuously on one another). Then, a simple application of Euler's formula gives the following relation between the number $t$ of regions in the chamber complex of $\CA^*$ (that is, the number of regular triangulations of $\CA$) and the number $c$ of crossings among the arcs in the Gale transform (see Lemma 4.1 in~\cite{AzaSan}):
\[
t = c + \binom{n}2 -n +2.
\]

Thus, deciding what is the configuration of $n$ points in dimension $n-4$ that minimizes the number of regular triangulations (under our genericity assumption) is equivalent to answering the following question:

\begin{question}[Spherical crossing number of $K_n$]
What is the geodesic embedding of the complete graph $K_n$ in the 2-sphere that minimizes the number $c$ of pairs of edges that cross each other?
\end{question}

This is a classical question in geometric/topological graph theory, for which the complete answer is unknown, despite considerable efforts. A summary of what we know is:

\begin{enumerate}
    \item It is conjectured that the minimum value of $c$ that can be attained is
    \[
    Z(n):=\frac{1}{4}
    \left\lfloor \frac{n}2\right\rfloor
    \left\lfloor \frac{n-1}2\right\rfloor
    \left\lfloor \frac{n-2}2\right\rfloor
    \left\lfloor \frac{n-3}2\right\rfloor,
    \]
    not only in geodesic drawings but actually in any topological drawing of $K_n$ in the 2-sphere (or, equivalently, in the plane). This was originally conjectured by Hill and popularized by Guy \cite{Guy,HaHi}.
    
    \item  Several embeddings attaining precisely that number are known, and one of them happens to be the geodesic embedding with points in two opposite circles, that is, the Gale transform of the cyclic polytope. This is  one of the embeddings originally found by Hill, see e.g., \cite[Figure 5]{HaHi}; its number of crossings is also computed, in connection to regular triangulations of $C_{n-4}(n)$, in \cite[Proposition 4.2]{AzaSan}.
    
\end{enumerate}
See~\cite{BeWi} for an account of the early history of the crossing number problem and its variants, and~\cite{Schaefer} for a comprehensive survey.

As a conclusion we have that:

\begin{corollary}
If Hill's Conjecture holds then the minimum number of regular triangulations among all generic configurations of size $n$ and dimension $n-4$ is attained by the vertex set of a (generic) cyclic polytope $C_{n-4}(n)$. The number is  $Z(n) + \binom{n}2 -n +2$.
\end{corollary}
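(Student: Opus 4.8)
The plan is to read the corollary as a direct translation, through the Euler-formula identity $t = c + \binom{n}2 - n + 2$ established above, of the (conjecturally known) solution to the spherical crossing-number problem for $K_n$. Since the additive constant $\binom{n}2 - n + 2$ is the same for \emph{every} generic configuration of size $n$ in dimension $n-4$, minimizing the number $t$ of regular triangulations of $\CA$ is equivalent to minimizing the number $c$ of crossings in the geodesic drawing of $K_n$ on $S^2$ determined by the Gale transform $\CA^*$. Thus the entire statement follows by pushing the crossing-number information through this identity.

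First I would establish the lower bound. For any generic configuration $\CA$, the Gale transform $\CA^*$ is a set of $n$ points on $S^2$, and joining them by the shorter geodesic arcs produces a geodesic, hence in particular a topological, drawing of $K_n$. Assuming Hill's Conjecture, every topological drawing of $K_n$ has at least $Z(n)$ crossings; applying this to $\CA^*$ gives $c \ge Z(n)$, and therefore $t = c + \binom{n}2 - n + 2 \ge Z(n) + \binom{n}2 - n + 2$. This holds for all generic $\CA$, with no need to identify which geodesic drawings actually occur as Gale transforms: the bound is inherited for free from the larger class of topological drawings, since geodesic drawings are a special case.

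Next I would exhibit a configuration attaining the bound. The Gale transform of the cyclic polytope $C_{n-4}(n)$ can be realized as $\lceil n/2\rceil$ points near the north pole and $\lfloor n/2\rfloor$ near the south pole, and this is precisely one of Hill's optimal drawings, with exactly $Z(n)$ crossings (see \cite[Figure 5]{HaHi} and \cite[Proposition 4.2]{AzaSan}). Taking a generic such realization, we get $t = Z(n) + \binom{n}2 - n + 2$, matching the lower bound, so the minimum is attained by the (generic) cyclic polytope and equals $Z(n) + \binom{n}2 - n + 2$.

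The only point requiring care — and where the argument could silently fail — is the genericity bookkeeping. The identity relating $t$ and $c$ was derived under the no-triple-crossing hypothesis, so I must ensure that (a) this hypothesis is part of the class of configurations over which we minimize, which it is by the statement, and (b) the conjectured optimizer, the two-circle Hill drawing, can be taken triple-crossing-free while still realizing exactly $Z(n)$ crossings, so that the formula applies to it and the upper bound is genuinely achieved within the class being minimized over. Granting Hill's Conjecture, no further input is needed beyond these checks.
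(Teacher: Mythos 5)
Your proposal is correct and follows essentially the same route as the paper's (largely implicit) argument: push Hill's conjectured lower bound $c\ge Z(n)$ through the identity $t=c+\binom{n}{2}-n+2$, and match it with the two-circle Gale transform of the cyclic polytope, which is one of Hill's optimal drawings. Your extra care about the no-triple-crossing hypothesis is exactly what the paper absorbs into its definition of ``generic,'' so nothing further is needed.
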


Here ``generic'' is stronger than general position; it can be defined via the property that no three geodesic arcs in the Gale transform $\CA^*$ meet at a point or, also, calling a configuration $\CA$ generic if any sufficiently small perturbation preserves its set of regular triangulations.

\end{document}